\newcommand{\R}{\mathbb{R}}
\newcommand{\B}{\mathbb{B}}
\newcommand{\CEV}{\operatorname{\it CEV}}
\def\eps{\varepsilon}
\def\Min(#1,#2){#1\wedge #2}
\def\Max(#1,#2){#1\vee #2}
\def\e{{\rm e}}
\def\vto{{\stackrel{v}{\to}}}
\begin{document}

\title{Conditional Extreme Value Models: Fallacies and Pitfalls}
\author{Holger Drees \and Anja Jan{\ss}en 
}
\institute{H. Drees \at
              University of Hamburg, Department of Mathematics,
 SPST, Bundesstr.\ 55, 20146 Hamburg, Germany. \\
              \email{drees@math.uni-hamburg.de}
              \and
A. Jan{\ss}en \at
University of Copenhagen, Department of Mathematics, Universitetsparken 5, 2100 Copenhagen, Denmark. \\
\email{anja@math.ku.dk}}

\maketitle

\begin{abstract}
Conditional extreme value models have been introduced by \cite{HR07} to describe the asymptotic behavior of a random vector as one specific component becomes extreme. Obviously,  this class of models is related to classical multivariate extreme value theory which describes the behavior of a random vector as its norm (and therefore at least one of its components) becomes extreme. However, it turns out that this relationship is rather subtle and sometimes contrary to intuition. We clarify the differences between the two approaches with the help of several illuminative (counter)examples. Furthermore, we discuss  marginal standardization, which is a useful tool in classical multivariate extreme value theory but, as we point out, much less straightforward and sometimes even obscuring in conditional extreme value models. Finally, we indicate how, in some situations, a more comprehensive characterization of the asymptotic behavior can be obtained if the conditions of conditional extreme value models are relaxed so that the limit is no longer unique.
\keywords{Conditional extremes \and Hidden regular variation \and Multivariate extreme value models}
\subclass{60G70 \and 60F05}
\end{abstract}

\section{Introduction}
\subsection{Motivation and Overview}

The analysis of the extremal behavior of random vectors constitutes the central topic of multivariate extreme value theory (MEVT). Its historical development started with the extension of results from univariate extreme value theory to the multivariate setting by describing the limiting behavior of componentwise maxima of iid random vectors under suitable linear normalization. This behavior is determined by the distribution of $(X_1, \ldots, X_d)$ on the set of points with at least one large component. However, there are situations where this approach does not work, e.g., because the normalized maxima of some of the components do not converge.

In such a case, if one is interested  in the behavior of the other components as one component  becomes extreme, a so-called conditional extreme value model (CEVM) introduced by \cite{HR07} may be appropriate. This model describes (certain aspects of) the behavior of $(X_1, \ldots, X_d)$ on the set of points where a {\em given} component is large. More concretely, it is assumed that one marginal distribution function, say the $i$th, belongs to the domain of attraction of some extreme value distribution, and that the conditional distribution of the other components, given that $X_i$ {\em exceeds} a high threshold converges after a suitable linear normalization.
\cite{DR11a} and \cite{DR11b} explored some connections between a CEVM and the aforementioned model from classical MEVT  by comparing vague convergence of certain measures related to the distribution of $(X_1,\ldots,X_d)$ on different spaces. We use this framework of vague convergence in  Section \ref{Sec:EVTmodels} below, where we recall the basics of both concepts.

A somewhat related model has been suggested by \cite{HT04}. Here the basic assumption is that the conditional distribution of the other normalized components converge given that $X_i$ {\em is equal to} a large value. \cite{RZ14} rephrased this assumption in terms of convergence of Markov kernels and examined its relation to a CEVM.   They showed that, in the general case of non-standardized components, additional assumptions about the normalizing functions have to be satisfied in order to ensure that a CEVM follows from the convergence of Markov kernels, cf.\ \cite{RZ14}, Proposition 4.1. Since our analysis will be based on vague convergence of measures, we will take the CEVM assumption as a starting point instead of the model from \cite{HT04}.

In order to separate the marginal tail behavior from the dependence structure between extreme values of the components, it is common in classical MEVT to analyze the asymptotic behavior of the standardized vector $(X^*_1, \ldots, X_d^*)=(1/(1-F_1(X_1)), \ldots, 1/(1-F_d(X_d)))$, whose marginal distributions are  standard Pareto if the marginal distributions of $(X_1, \ldots, X_d)$ are continuous.  It is known that the componentwise maxima of the standardized vector converge  if and only if $tP\{(X^*_1/t, \ldots, X^*_d/t) \in \cdot \}$ converges vaguely to a non-degenerate limit measure, which means that $(X^*_1, \ldots, X_d^*)$ exhibits standard multivariate regular variation. Note that here the normalization is particularly simple and does not depend on the distribution of $(X_1^*,\ldots,X^*_d)$ anymore.

In the context of a CEVM, \cite{HR07} and \cite{DR11a} also discuss the issue of standardizing the marginal distributions. However, their aim was merely to ensure that the same simple normalization can be used, whereas they do not try to transform the marginal distribution to a prescribed one. We will see in Section \ref{Sec:Standardization} that the meaning and the interpretation of standardization in a CEVM is so far not well understood. In particular it turns out  that often a standardization  in the sense of \cite{HR07} and \cite{DR11a} completely changes the information about the extremal dependence structure. We thus think that
there does not exist a ``natural'' way to standardize the margins in a general CEVM. In some cases, however, when the CEVM describes the asymptotic behavior of the not necessarily extreme components locally in a one-sided neighborhood of a fixed point, standardization is feasible.

In Section \ref{Sec:Relations}, we discuss the aforementioned connections between the CEVM and classical models from MEVT in detail and point out that the relations are much more intricate than realized so far. We show by (counter)examples that despite  apparent similarities between the two models both concepts are too different to generally infer the convergence assumed in one model from the convergence in the other model - often contrary to intuition. For example, if the vector $(X_1,X_2)$ is in the domain of attraction of a bivariate extreme value distribution and the two components are not asymptotically independent, then one may conjecture that this vector satisfies a CEVM as well. However, since the  treatment of both the left tail and the central part of the distribution of $X_1$ differs in the two models, this needs not be the case. The counterexamples that we present show that, more fundamentally, both types of models may convey very different information about the behavior of $X_1$ for large values of $X_2$, because in the classical extreme value models the normalizing functions for $X_1$ are determined by its tail behavior while in a CEVM they must match the overall behavior of $X_1$ for large values of $X_2$, which need not be related to large values of $X_1$. One may try to overcome these differences by additional assumptions on the normalizing functions; cf.\ \cite{DR11a}. However, our counterexamples also show that some of the assumptions stated in the literature so far are in fact too weak in certain cases.

Finally, in Section \ref{sect:discussion} we put possible modifications of the CEVM forward which could help to overcome some of the drawbacks of the model identified in the preceding sections.
\section{Extreme Value Models}\label{Sec:EVTmodels}
\subsection{Classical MEVT}\label{Sec:MEVT}
Classical MEVT describes the limiting behavior of the joint distribution of componentwise maxima of iid random vectors after a suitable linear standardization of the margins. For notational simplicity, we focus on the bivariate case where for iid random vectors $(X_i,Y_i)$, $1\le i\le n$, it is assumed that
\begin{equation} \label{eq:MEVT}
 P\Big\{ \Big(\frac{\max_{1\le i\le n} X_i-b_n}{a_n},
 \frac{\max_{1\le i\le n} Y_i-d_n}{c_n}\Big)\in \cdot\Big\} \;\to\; G\quad \text{weakly}
\end{equation}
 for suitable normalizing constants $a_n,c_n>0$ and $ b_n, d_n\in\R$ and a bivariate distribution function $G$ with non-degenerate marginal distributions.

In particular, the marginal distribution functions $F_X$ and $F_Y$  of a random vector $(X,Y)$ with the same distribution as $(X_1,Y_1)$ belong to the domain of attraction of some extreme value distribution $G_{\gamma_X}$ resp.\ $G_{\gamma_Y}$, where $G_\gamma(x):=\exp\big(-(1+\gamma x)^{-1/\gamma}\big)$ for all $x$ such that $1+\gamma x>0$. This means that for suitable normalizing functions $a,c>0$ and $b,d\in\R$
\begin{eqnarray}
tP\Big\{\frac{X-b(t)}{a(t)}>x\Big\} & \to & (1+\gamma_Xx)^{-1/\gamma_X} \quad \forall\, x\in E^{(\gamma_X)}  \label{eq:margconvX}\\
tP\Big\{\frac{Y-d(t)}{c(t)}>y\Big\} & \to & (1+\gamma_Yy)^{-1/\gamma_Y} \quad \forall\, y\in E^{(\gamma_Y)} \label{eq:margconvY}
\end{eqnarray}
as $t\to\infty$. (Throughout the paper, $(1+\gamma x)^{-1/\gamma}$ is interpreted as $\e^{-x}$ if $\gamma=0$; likewise $(x^\gamma-1)/\gamma:=\log x$ for $\gamma=0$.)
Here
$$ E^{(\gamma)}:= \{x\in\R\mid 1+\gamma x>0\} = \left\{
   \begin{array}{lcl}
     (-\infty,-1/\gamma) & & \gamma<0,\\
     (-\infty,\infty) & \text{if} & \gamma=0,\\
     (-1/\gamma,\infty) & & \gamma>0
   \end{array}
   \right.$$
denotes the interior of the support of $G_\gamma$. Moreover, let
$$ q_\gamma:= \inf E^{(\gamma)} = \left\{
  \begin{array}{l@{\quad}l}
      -\infty & \gamma\le 0,\\
      -1/\gamma & \gamma>0
  \end{array}
  \right.
  \quad \text{and} \quad
  q^{\gamma}:= \sup E^{(\gamma)} = \left\{
  \begin{array}{l@{\quad}l}
      -1/\gamma & \gamma< 0,\\
      \infty & \gamma\ge 0
  \end{array}
  \right.
$$
be the left and right endpoint of $E^{(\gamma)}$.
Let $\bar E^{(\gamma)}=E^{(\gamma)}\cup\{q^{\gamma}\}$ denote the closure to the right of $E^{(\gamma)}$ (considered as a subset of the compactification $\bar\R=[-\infty,\infty]$ of $\R$), and let $\bar{\bar E}^{(\gamma)}=E^{(\gamma)}\cup\{q_\gamma,q^{\gamma}\}$ denote the topological closure of $E^{(\gamma)}$ in $\bar\R$. Define the one-point uncompactification of $\bar{\bar E}^{(\gamma_X)}\times \bar{\bar E}^{(\gamma_Y)}$ by  $E^{(\gamma_X,\gamma_Y)}:= \big(\bar{\bar E}^{(\gamma_X)}\times \bar{\bar E}^{(\gamma_Y)}\big)\setminus \{(q_{\gamma_X},q_{\gamma_Y})\}$.
See \cite{Res07}, Section 3.3, and \cite{DR11a} for details about the topology on these sets.

Convergence \eqref{eq:MEVT} is equivalent to the vague convergence
\begin{equation} \label{eq:MEVvague}
 t P\Big\{\Big(\max\Big(\frac{X-b(t)}{a(t)}, q_{\gamma_X}\Big) ,\max\Big(\frac{Y-d(t)}{c(t)}, q_{\gamma_Y}\Big)\Big)\in\cdot\Big\} \,\vto\, \mu \quad \text{as } t\to\infty
\end{equation}
on $\big([q_{\gamma_X},\infty]\times [q_{\gamma_Y},\infty]\big)\setminus \{(q_{\gamma_X},q_{\gamma_Y})\}$ for some normalizing functions $a,c>0$ and $b,d\in\R$ to a  Radon measure $\mu$ with non-degenerate margins satisfying $\mu(\{\infty\}\times [q_{\gamma_Y},\infty])=0=\mu([q_{\gamma_X},\infty]\times\{\infty\})$ (see \cite{BGST04}, (8.71)). This is equivalent to
\begin{equation} \label{eq:MEVconv1}
   t P\Big\{\Big(\max\Big(\frac{X-b(t)}{a(t)}, q_{\gamma_X}\Big) ,\max\Big(\frac{Y-d(t)}{c(t)}, q_{\gamma_Y}\Big)\Big)\in A\Big\}\to \mu(A)<\infty
\end{equation}
for all Borel sets $A\subset E^{(\gamma_X,\gamma_Y)}$ bounded away from $(q_{\gamma_X},q_{\gamma_Y})$ such that $\mu(\partial A)=0$. Note that for $A=(x,\infty)\times \bar{\bar E}^{(\gamma_Y)}$ and $A=\bar{\bar E}^{(\gamma_X)}\times (y,\infty)$ we recover the convergence of the left-hand sides of \eqref{eq:margconvX} and \eqref{eq:margconvY}, respectively. Hence, for the above choice of the normalizing functions, $\mu\big((x,\infty]\times \bar{\bar E}^{(\gamma_Y)}\big)=(1+\gamma_Xx)^{-1/\gamma_X}$ for all $x\in E^{(\gamma_X)}$, and  $\mu\big(\bar{\bar E}^{(\gamma_X)}\times (y,\infty]\big)=(1+\gamma_Yy)^{-1/\gamma_Y}$ for all $y\in E^{(\gamma_Y)}$.
Moreover, \eqref{eq:MEVvague} is equivalent to
\begin{equation} \label{eq:MEVconvsurv}
 t P\Big\{\frac{X-b(t)}{a(t)}>x \text{ or } \frac{Y-d(t)}{c(t)}> y\Big\} \,\to\, \mu \big(E^{(\gamma_X,\gamma_Y)}\setminus([q_{\gamma_X},x]\times [q_{\gamma_Y},y])\big)
\end{equation}
for all $(x,y)\in E^{(\gamma_X)}\times E^{(\gamma_Y)}$. Therefore, in a slight abuse of notation, one may also write
$$ t P\Big\{\Big(\frac{X-b(t)}{a(t)},\frac{Y-d(t)}{c(t)}\Big)\in\cdot\Big\} \,\vto\, \mu \quad \text{as } t\to\infty
$$
on $E^{(\gamma_X,\gamma_Y)}$.

Sometimes it is useful to examine the marginal distributions and the dependence structure separately. To this end, define marginally normalized random variables
$$ X^* := \frac 1{1-F_X(X)}, \quad Y^* := \frac 1{1-F_Y(Y)}.$$
 If $F_X$ is continuous, then $X^*$ has a standard Pareto distribution; more generally, under $\eqref{eq:MEVT}$, $X^*$ is tail-equivalent to a standard Pareto random variable, i.e.\ $xP\{X^*>x\}\to 1$ as $x\to\infty$. Now convergence \eqref{eq:MEVT} is equivalent to the marginal convergences
\eqref{eq:margconvX} and \eqref{eq:margconvY}
 together with the vague convergence
\begin{equation} \label{eq:MEVstandvague}
 t P\Big\{\Big(\frac{X^*}{t},\frac{Y^*}{t}\Big)\in\cdot\Big\} \,\vto\, \mu^* \quad \text{as } t\to\infty
\end{equation}
in $[0,\infty]^2\setminus\{(0,0)\}$ to the non-degenerate Radon measure $\mu^*$ given by
$$\mu^*\big([0,\infty]^2\setminus([0,x]\times[0,y])\big):= \mu\bigg(E^{(\gamma_X,\gamma_Y)}\setminus\Big( \Big[q_{\gamma_X},\frac{x^{\gamma_X}-1}{\gamma_X}\Big]\times  \Big[ q_{\gamma_Y},\frac{y^{\gamma_Y}-1}{\gamma_Y}\Big]\Big)\bigg)
$$
for all $x,y>0$.
The latter convergence means that
$tP\{(X^*,Y^*)\in tA\}\to  \mu^*(A)$ for all Borel sets bounded away from the origin that are continuity sets of $\mu^*$. Hence, the limit relations \eqref{eq:MEVvague} and \eqref{eq:MEVstandvague} describe the asymptotic behavior of $(X,Y)$ and $(X^*,Y^*)$, respectively, when {\em at least one} component of the random vector is large. Note that the limit measure $\mu^*$ is homogeneous of order $-1$, i.e. $\mu^*(tA)=t^{-1}\mu^*(A)$ for all $t>0$ and $A\in\B([0,\infty)^2)$.

 \subsection{Conditional Extreme Value Models}\label{Sec:CEVM}
In some applications, though, one is interested in the behavior of the random vector if a {\em given} component, say $Y$, is large. For example, the
marginal expected shortfall of an asset which is defined as the expected loss from this investment given that some other quantity (e.g., the total loss of a portfolio) exceeds a high threshold is an important risk measure for a  manager who must decide whether to add the asset to a given portfolio.  Another example is given by internet traffic data, as analyzed in \cite{DR11b}, who looked at both file sizes and transfer rates of transmissions in a network. They argued that the distribution of the transfer rates may not be in the domain of attraction of an extreme value distribution, in contrast to the file size distribution. Still, the asymptotic behavior of the transmission rate, given that the size of this transmitted file is large, is an important performance measure of the network. See also \cite{HT04} for an example concerning air pollutants.

In situations like these, the asymptotic behavior of the random vector may be described by a so-called conditional extreme value model (CEVM), as introduced in \cite{HR07} and further developed in \cite{DR11a}.

Here we start with a clarification of the model assumptions given by \cite{RZ14}.

\begin{definition} \label{def:CEV}

  The vector $(X,Y)$ follows a CEVM if there exist normalizing functions $\alpha,c>0$ and $\beta,d\in\R$ such that \eqref{eq:margconvY} holds and, as $t\to\infty$,
  \begin{equation} \label{eq:CEVconv}
    tP\Big\{\Big(\frac{X-\beta(t)}{\alpha(t)},\frac{Y-d(t)}{c(t)}\Big)\in\cdot\Big\} \,\vto\, \mu_{X,Y>}(\cdot)
  \end{equation}
  vaguely on $\bar\R\times \bar E^{(\gamma_Y)}$, and the limit measure $\mu_{X,Y>}$ satisfies the following non-degeneracy conditions:
  \begin{enumerate}
   \item $\mu_{X,Y>}(\cdot\times(y,\infty])$ is a  non-degenerate measure on $\bar\R$ for all $y\in E^{(\gamma_Y)}$, i.e.\ $\mu_{X,Y>}\big((\bar\R\setminus\{x\})\times(y,\infty]\big)>0$ for all $x\in\bar\R$ and $y\in E^{(\gamma_Y)}$;
   \item $\mu_{X,Y>}(\{\infty\}\times \bar E^{(\gamma_Y)})=0$.
  \end{enumerate}
  Then we write in short $(X,Y)\in \CEV(\alpha,\beta,c,d,\gamma_Y,\mu_{X,Y>})$.
\end{definition}
Since $[-\infty,\infty]$ is compact, the vague convergence \eqref{eq:CEVconv} describes the behavior of the random vector  $(X,Y)$ when only $Y$ needs to be extreme.

\begin{remark}\label{rem:CEV}
\begin{enumerate}
  \item Here and in the sequel, we extend a measure $\nu$ defined on a Borel subset $M$ of $\bar \R^2$ to a measure $\tilde\nu$ on $\bar\R^2$ by $\tilde\nu(A):=\nu(A\cap M)$ if necessary. For simplicity, in what follows we denote the extension again by $\nu$.
  \item Similarly as above, \eqref{eq:CEVconv} is equivalent to
     $$  tP\Big\{\Big(\frac{X-\beta(t)}{\alpha(t)},\frac{Y-d(t)}{c(t)}\Big)\in A\Big\} \to \mu_{X,Y>}(A)
     $$
     for all Borel sets $A\subset \bar\R\times(q_{\gamma_Y},\infty]$ with $\inf\{y|(x,y)\in A\}>q_{\gamma_Y}$ such that $\mu_{X,Y>}(\partial A)=0$. This is equivalent to
     $$  tP\Big\{\frac{X-\beta(t)}{\alpha(t)}\le x,\frac{Y-d(t)}{c(t)}>y\Big\}\,\to\, \mu_{X,Y>}([-\infty,x]\times (y,\infty])
     $$
     for all $x\in\bar\R$ and $y\in E^{(\gamma_Y)}$  with $\mu_{X,Y>}(\partial ([-\infty,x]\times (y,\infty]))=0$.
  \item
  \cite{DR11a} and \cite{RZ14} do not assume \eqref{eq:margconvY}, but they {\em conclude} this convergence for {\em some}  extreme value index $\tilde\gamma_Y\in\R$ from the remaining assumptions. Note, however, that this index need not coincide with the  value $\gamma_Y$ used in the definition of the CEVM which only determines the support of $\mu_{X,Y>}$. In particular, if the assumptions of Definition \ref{def:CEV} are fulfilled for some $\gamma_Y>0$, then the convergence of measures \eqref{eq:CEVconv} obviously holds true on $\bar E^{(\gamma_Y')}$ for all  $\gamma_Y'>\gamma_Y$, too. Therefore, we have chosen the present formulation to avoid ambiguities concerning the meaning of the parameter $\gamma_Y$. Moreover, this way it seems more natural to consider vague convergence on the space $[-\infty,\infty]\times \bar E^{(\gamma_Y)}$.

  The additional normalizing condition $\mu_{X,Y>}(\bar\R\times(0,\infty])=1$ required by \cite{DR11a} and \cite{RZ14} follows readily from convergence \eqref{eq:margconvY}.
  \end{enumerate}
\end{remark}

Condition (ii) has been added by \cite{RZ14} to the original set of conditions used by \cite{HR07} and \cite{DR11a} to ensure uniqueness of the type of limit measure (i.e.\ uniqueness up to scale and shift transformations). Yet, as the following example shows, this condition has to be strengthened to rule out that different normalizations lead to completely different limit measures.
\begin{example} \label{ex:condii}
  Let $Y$ be a standard Pareto random variable (i.e. $P\{Y>x\}=1/x$ for all $x>1$) so that \eqref{eq:margconvY} holds with $\gamma_Y=1$. Let $B$ be an independent discrete random variable that is uniformly distributed on $\{0,1\}$,
  $$ X:= B+(1-B)(2-1/Y), $$
  and $c(t):=d(t):= t$.
  \begin{itemize}
    \item For $\beta(t)=0$ and $\alpha(t)=1$ one has, for all $y\in E^{(1)}=(-1,\infty)$, $x\in\R$ and sufficiently large $t$,
        \begin{eqnarray*}
          \lefteqn{tP\Big\{\frac{X-\beta(t)}{\alpha(t)}\le x,\frac{Y-c(t)}{d(t)}>y\Big\}}\\
          & = & \frac t2 \big( P\{1\le x, Y>t(1+y)\}+P\{2-1/Y\le x, Y>t(1+y)\}\big)\\
          & = & \frac t2 \Big( \frac 1{t(1+y)}1_{[1,\infty)}(x)+\frac 1{t(1+y)}1_{[2,\infty)}(x)\Big) \\
          & = & \frac 1{2} \big(1_{[1,\infty)}(x)+1_{[2,\infty)}(x)\big)(1+y)^{-1}\\
          & = & \mu_{X,Y>}\big([-\infty,x]\times (y,\infty]\big).
        \end{eqnarray*}
        Here $\mu_{X,Y>}=(\frac 12\delta_1+\frac12 \delta_2)\otimes\nu_1$, where $\delta_x$ denotes the Dirac-measure at $x$  and $\nu_1$ is given by $\nu_1(y,\infty]=1/(1+y)$ for all $y>-1$.
This verifies \eqref{eq:CEVconv}, and the limit measure satisfies the conditions (i) and (ii) of Definition \ref{def:CEV}.
    \item Choosing $\beta(t)=2$ and $\alpha(t)=1/t$ instead yields for all $x\in\R$ and $y\in E^{(1)}=(-1,\infty)$
        \begin{eqnarray*}
          \lefteqn{tP\Big\{\frac{X-\beta(t)}{\alpha(t)}\le x,\frac{Y-d(t)}{c(t)}>y\Big\}}\\
          & = & \frac t2 \big( P\{-1\le x/t, Y>t(1+y)\}+P\{1/Y\ge -x/t, Y>t(1+y)\}\big)\\
          & \to & \frac 1{2}(1+y)^{-1} + \frac 12 \big((1+y)^{-1}-x^-\big)^+\\
          & = & \tilde\mu_{X,Y>}\big([-\infty,x]\times (y,\infty]\big)
        \end{eqnarray*}
        as $t\to\infty$, with $x^-:=\max(-x,0)$.  Here, $\tilde \mu_{X,Y>}$ is the sum of $\frac 12\delta_{-\infty}\otimes\nu_1$ and the measure $\bar\mu$ which is concentrated on $\{(-(1+y)^{-1},y)|y>-1\}$ with $\bar\mu(\{(-(1+y)^{-1},y)|y>r\})=1/(2(1+r))$ for all $r>-1$. Again all conditions of Definition \ref{def:CEV} are met, but $\tilde \mu_{X,Y>}\big(\{-\infty\}\times(y,\infty]\big)=1/(2(1+y))>0$ for all $y\in E^{(1)}$.
    \end{itemize}
    So while the first limit measure describes the coarse behavior  of $X$ for large values of $Y$, namely that $X$ can attain only values near the points 1 and 2, the second limit specifies the behavior of $X$ in a neighborhood of 2 in greater detail, but it loses almost all information on the behavior of $X$ on $(-\infty,2-\eps]$ for any $\eps>0$. \qed
  \end{example}

To ensure that the convergence to types theorem can be applied which implies the uniqueness of the limit measure (up to scaling and shifts), we replace condition (ii) of Definition \ref{def:CEV} with
\begin{itemize}
 \item[(ii*)] $\mu_{X,Y>}\big(\{-\infty,\infty\}\times E^{(\gamma_Y)}\big)=0$.
\end{itemize}
If $(X,Y)$ satisfies this modified set of conditions, we write $(X,Y)\in \CEV^*(\alpha,\beta,c,d,\gamma_Y,\mu_{X,Y>})$. In Section 4 we discuss why in some situations it might be advantageous to drop condition (ii) (and (ii*)) although then the uniqueness of the limit measure is lost.
\begin{remark}
   One can avoid any assumption ruling out mass in lines through infinity by using the framework of $M$-convergence instead of vague convergence, cf.\ \cite{HuLi06} and \cite{LiReRo14}.  Since assumption (ii*) is sufficient for our subsequent analysis, here we stick to the more conventional approach in order to facilitate the comparison with the previous literature on CEVM.
\end{remark}


\section{Standardization of CEVM}\label{Sec:Standardization}

In classical MEVT one often considers the marginally standardized random vector $(X^*,Y^*)$, which has standard Pareto margins if the distribution functions of $X$ and $Y$ are continuous. This allows for a separation of the marginal tail behavior (investigated in univariate extreme value theory) and the extremal dependence structure which is inherent to MEVT. It seems reasonable to ask whether a similar separation can be achieved in CEVM, too.

It is easily seen that the component $Y$ assumed to be large can be standardized in the same way as before, because its distribution function belongs to the domain of attraction of $G_{\gamma_Y}$. Hence, convergence \eqref{eq:CEVconv} holds if and only if
\begin{equation} \label{eq:CEVYstandconv}
t P\Big\{ \Big( \frac{X-\beta(t)}{\alpha(t)}, \frac{Y^*}t\Big) \in \cdot \Big\} \,\vto\, \mu^*_{X,Y>}(\cdot)
\end{equation}
vaguely in $[-\infty,\infty]\times(0,\infty]$ with $\mu^*_{X,Y>}$ defined by
$$ \mu^*_{X,Y>}([-\infty,x]\times(y,\infty]) := \mu_{X,Y>}\Big([-\infty,x]\times\Big(\frac{y^{\gamma_Y}-1}{\gamma_Y},\infty\Big]\Big)
$$
for all $x\in\R$ and $y>0$; see \cite{DR11a}, Section 3, for details. (Note that in this context the normalizing functions for $Y^*$ are chosen slightly differently so that the limit in \eqref{eq:margconvY} equals $y^{-1}$ for all $y>0$; to obtain the usual limit, one has to consider $(Y^*-t)/t$ instead.)

In contrast, the distribution function of $X$ does in general not belong to the domain of attraction of any extreme value distribution. Indeed, not the tail behavior of $X$ is of interest, but the conditional behavior of $X$ when $Y$ attains large values. \cite{HR07} and \cite{DR11a} thus aimed at the more modest goal of finding a function $f$ such that $(f(X),Y^*)$ follows a CEVM with normalizing functions $\beta(t)=d(t)=0$ and $\alpha(t)=c(t)=t$, i.e.
\begin{equation} \label{eq:CEVstandconv}
t P\Big\{ \Big( \frac{f(X)}{t}, \frac{Y^*}t\Big) \in \cdot \Big\} \,\vto\, \mu^{**}_{X,Y>}(\cdot)
\end{equation}
vaguely in $[0,\infty]\times(0,\infty]$ with $\mu^{**}_{X,Y>}$ satisfying the non-degeneracy conditions
\begin{align}
   & \mu^{**}_{X,Y>}(\cdot\times (y,\infty])   \text{ is a finite non-degenerate measure for all } y>0, \text{ and } \label{eq:nondeg1}\\
   & \mu^{**}_{X,Y>}(\{\infty\}\times (0,\infty]) = 0  = \mu^{**}_{X,Y>}([0,\infty]\times  \{\infty\}). \label{eq:nondeg2}
\end{align}
Note that, unlike $X^*$ in classical MEVT, $f(X)$ does not have any pre-specified distribution.
Hence, the notion ``standardization'' only refers to the resulting normalizing functions, but not to the marginal distributions.

\cite{DR11a} required $f$ to meet the following conditions:
\begin{itemize}
  \item[(F1)] $f: \mbox{range}(X) \to (0,\infty)$ (with $\mbox{range}(X)$ denoting the range of $X$),
  \item[(F2)] $f$ is monotone,
  \item[(F3)] $f$ is unbounded.
\end{itemize}
The latter condition seems superfluous, because for bounded $f$ the limit measure in \eqref{eq:CEVstandconv} is necessarily concentrated on $\{0\}\times(0,\infty]$, violating \eqref{eq:nondeg1}.

The main claim in Section 3 of \cite{DR11a} is that such a function $f$ exists if and only if $\mu_{X,Y>}$, and thus $\mu^*_{X,Y>}$, is not a product measure. We show by counterexamples that in general neither of both implications hold. Moreover, even if both \eqref{eq:CEVconv} and \eqref{eq:CEVstandconv} hold, the respective limit measures may convey very different information about the conditional distribution of $X$ given that $Y$ is large.

\begin{example}  \label{ex:stand1}
  Let $Y$ be a standard Pareto random variable (and thus $Y=Y^*$), and for some independent discrete random variable $B$ that is uniformly distributed on $\{-1,1\}$ define $X:=2+B/Y$. Then, for all $x\in\R$ and $y>0$
  \begin{eqnarray*}
    \lefteqn{tP\Big\{\frac{X-2}{t^{-1}}\le x, \frac{Y^*}t>y\Big\} }\\
    & = & t \Big( P\Big\{B=1, Y\ge \frac tx, Y>ty\Big\} 1_{(0,\infty)}(x) \\
    & & \quad +  P\Big\{B=-1, Y\le \frac t{|x|}, Y>ty\Big\} 1_{(-\infty,0)}(x) + P\{B=-1, Y>ty\}1_{[0,\infty)}(x)\Big)\\
    & \to & \frac 12 \Big(\min\Big(x,\frac 1y\Big)1_{(0,\infty)}(x)+ \Big(\frac 1y-|x|\Big)^+1_{(-\infty,0)}(x) + \frac 1y 1_{[0,\infty)}(x)\Big)\\
    & = & \mu^*_{X,Y>}([-\infty,x]\times(y,\infty]),
  \end{eqnarray*}
  i.e., $(X,Y^*)$ satisfies the conditions of Definition \ref{def:CEV} (except for the normalized form of the limit in \eqref{eq:margconvY}) and (ii*) as well. Here $\mu^*_{X,Y>}$ denotes the measure that is concentrated on $\{(y^{-1},y)|y>0\}\cup \{(-y^{-1},y)|y>0\}$ with $\mu^*_{X,Y>}\big(\{(y^{-1},y)|y>r\}\big)=1/(2r)=\mu^*_{X,Y>}\big(\{(-y^{-1},y)|y>r\}\big), r>0$; hence it is not a product measure.

  Now consider an arbitrary function $f:(1,3)\to (0,\infty)$ satisfying (F1) and (F2). Since $f$ is monotone,  $u_0:=\sup_{3/2\le x\le 5/2} f(x)$ is finite. Thus, for $x,y>0$ and $t>\max(2/y,u_0/x)$,
  $$
     tP\Big\{\frac{f(X)}t>x,\frac{Y^*}t>y\Big\} = t P\big\{f(2+B/Y)>tx>u_0, Y>ty>2\big\} = 0.
  $$
  So, if \eqref{eq:CEVstandconv} holds, then $\mu^{**}_{X,Y>}((0,\infty)\times (y,\infty])=0$ for all $y>0$, that is,
  $\mu^{**}_{X,Y>}(\cdot\times (y,\infty])$ is degenerated, with all its mass concentrated at 0.

  We have hence shown that $X$ cannot be standardized in the sense used by \cite{DR11a}, despite the fact that $\mu^{*}_{X,Y>}$ is not a product measure. \qed
\end{example}

The next example shows that in some cases $X$ can even be standardized if $\mu^{*}_{X,Y>}$ is a product measure, and that the limit measures may describe completely different aspects of the stochastic behavior of $X$, given that $Y$ is large.
\begin{example} \label{ex:stand2}
 Let $X:=B(1-U/Y)$, for $Y$ and $B$ as in Example \ref{ex:stand1} and an independent random variable $U$ that is uniformly distributed on $(0,1)$. Then, for all  $x\neq -1, y>0$ and $t>1/y$,
 \begin{eqnarray*}
  t P\Big\{X\le x, \frac{Y^*}t >y\Big\} & = & \frac t2 \big( P\{1-U/Y\le x,Y>ty\}+P\{U/Y-1\le x, Y>ty\}\big) \\
  & = & \frac t2 E\bigg( P\Big(ty<Y\le \frac U{1-x}\,\Big|\, U \Big)1_{(-\infty,1)}(x)+P\{Y>ty\}1_{[1,\infty)}(x) \\
  & & \hspace*{1cm} +
  P\Big(Y> \max\Big(\frac U{1+x},ty\Big)\,\Big|\, U \Big)1_{(-1,0)}(x)+P\{Y>ty\}1_{[0,\infty)}(x)\bigg) \\
  & = & \frac t2 \bigg( E\Big( \Big(\frac 1{ty}-\frac{1-x}U\Big)^+\Big)1_{(-\infty,1)}(x)+ \frac 1{ty}1_{[1,\infty)}(x) \\
  & & \hspace*{1cm}
   + E\Big( \min\Big(\frac 1{ty},\frac{1+x}U\Big)\Big)1_{(-1,0)}(x) +\frac 1{ty}1_{[0,\infty)}(x)\bigg)\\
  & = &  E\Big( \Big(\frac 1{2y}-\frac{t(1-x)}{2U}\Big)^+\Big)1_{(-\infty,1)}(x)+\frac 1{2y}1_{[1,\infty)}(x) \\
  & & \hspace*{1cm}
 + E\Big( \min\Big(\frac 1{2y},\frac{t(1+x)}{2U}\Big)\Big)1_{(-1,0)}(x) +\frac 1{2y}1_{[0,\infty)}(x)\\
  & \to & \frac 1{2y}1_{[1,\infty)}(x)+\frac 1{2y}1_{(-1,0)}(x)+\frac 1{2y}1_{[0,\infty)}(x)\\
  & = & \frac 1{2y}\big(1_{[1,\infty)}(x)+1_{[-1,\infty)}(x)\big)\\
   & = &  \mu^*_{X,Y>}([-\infty,x]\times (y,\infty])
 \end{eqnarray*}
 as $t\to\infty$. Thus, \eqref{eq:CEVYstandconv} holds with limit measure $\mu^*_{X,Y>}= (\frac 12\delta_{-1}+\frac 12\delta_{1})\otimes\tilde\nu_1$, where $\tilde \nu_1$ is given by $\tilde \nu_1(y,\infty]=1/y$ for all $y>0$.

 Now define $f:(-1,1)\to (0,\infty)$ by $f(x)=1/(1-x)\in (1/2,\infty)$, which obviously satisfies (F1)--(F3). For all $x,y>0$ and $t>\max(1/x,1/y)$, one has
 \begin{eqnarray*}
   tP\Big\{\frac{f(X)}t\le x, \frac{Y^*}t>y\Big\} & = & tP\big\{X\le 1-1/(tx),Y>ty\big\} \\
   & = & \frac t2 \Big(P\Big\{1-\frac UY\le 1-\frac 1{tx},  Y>ty\Big\}  + P\{Y>ty\}\Big)\\
   & = & \frac t2 E\big( P(ty<Y \le tUx\,|\,U)\big) + \frac 1{2y}\\
   & = & \frac 12 \int_{(y/x,1]} \frac 1y-\frac 1{ux}\, du + \frac 1{2y}\\
   & = & \frac 12 \Big(\frac 1y-\frac 1x+\frac{\log(y/x)}x\Big)1_{\{x>y\}}+ \frac 1{2y}.
 \end{eqnarray*}
 Direct calculations show that the last expression equals $\mu^{**}_{X,Y>}([0,x]\times(y,\infty])$ where the measure $\mu^{**}_{X,Y>}$ is the sum of $\frac 12 \delta_0\otimes\tilde{\nu}_1$ and the measure with Lebesgue density $g(x,y)=(2x^2y)^{-1}1_{\{x>y\}}$.
 Hence, \eqref{eq:CEVstandconv} holds with a non-degenerate limit measure $\mu^{**}_{X,Y>}$, although $\mu^{*}_{X,Y>}$ is a product measure.

 Note that the limit measures $\mu^*_{X,Y>}$ and $\mu^{**}_{X,Y>}$ convey very different information on the behavior of $X$. While the former shows that,  for large values of $Y$, the random variable $X$ may only attain values close to $\pm 1$ (with probability $1/2$ in each case), but does not reveal any more detailed information on its behavior in the vicinity of these points, $\mu^{**}_{X,Y>}$ describes the fine structure of the conditional distribution of $X$ near 1, but does not give any information about its behavior on sets bounded away from 1, beyond the fact that half of its mass is concentrated on these sets. \qed
 \end{example}

 \begin{remark}
   In Example \ref{ex:stand2} the measure $\mu^{**}_{X,Y>}(\cdot\times(y,\infty])$ has half of its mass concentrated in 0. In a sense, this point corresponds to $-\infty$ in the original setting for $(X,Y)$. Hence, one might think of strengthening condition \eqref{eq:nondeg2} to $\mu^{**}_{X,Y>}(\{0,\infty\}\times (0,\infty]) = 0 = \mu^{**}_{X,Y>}([0,\infty]\times  \{\infty\})$, similarly as we have replaced condition (ii) in Definition \ref{def:CEV} with (ii*). Then counterexamples of the type considered in Example \ref{ex:stand2} are ruled out, and the conclusion that a standardization of $X$ is impossible if $\mu^*_{X,Y>}$ is a product measure may be correct. Note, however, that (ii*) has been introduced to ensure that the convergence to types theorem is applicable and the limit measure is unique (up to scale and shift transformations). In the situation of \eqref{eq:CEVstandconv}, where we consider convergence on a space with finite left endpoint of the $x$-component, uniqueness of the limit is already ensured under the weaker condition \eqref{eq:nondeg2}.
\end{remark}

Conversely, one may construct random vectors $(X,Y)$ which do not fulfill the conditions of a CEVM (incl.\ condition (ii*)), but the relations \eqref{eq:CEVstandconv}--\eqref{eq:nondeg2} hold for a suitable function $f$ which satisfies (F1)--(F3), and the limit measure $\mu^{**}_{X,Y>}$ is not a product measure. The following example of this type is a modification of Example 4.1 of \cite{RZ14}.
\begin{example} \label{ex:stand3}
 Let $Y$ be standard Pareto, $X:=\e^Y$ and $f:=\log$. Then obviously \eqref{eq:CEVstandconv} holds with a measure $\mu^{**}_{X,Y>}((x,\infty]\times(y,\infty])=\min\big(x^{-1},y^{-1}\big)$ for all $x\ge 0, y>0$, which is concentrated on the main diagonal and hence cannot be a product measure.

 Suppose $(X,Y)=(X,Y^*)$ satisfies convergence \eqref{eq:CEVYstandconv} for some normalizing functions $\alpha>0$ and $\beta\in\R$ and some measure $\mu^*_{X,Y>}$ such that $\mu^*_{X,Y>}(\{-\infty,\infty\}\times (0,\infty])=0$. \cite{HR07} have shown in their Proposition 1 that then there exist $C,\rho\in\R$ such that, as $t\to\infty$,
  \begin{equation} \label{eq:normregvar}
    \frac{\alpha(\lambda t)}{\alpha(t)} \to \lambda^\rho,\quad \frac{\beta(\lambda t)-\beta(t)}{\alpha(t)} \to C \frac{\lambda^\rho-1}\rho.
  \end{equation}
   Here one may assume without loss of generality that $C\ne 0$; else replace $\beta(t)$ with $\beta(t)+\alpha(t)$ for which \eqref{eq:CEVYstandconv} also holds true (with a different limit measure). A combination of Theorem B.2.2 and Corollary B.2.13 of \cite{dHF06} shows that $|\beta|$ is regularly varying with index $\rho$ or $0$. Thus $\alpha(t)x+\beta(t)=o(t^{\max(\rho,0)+\eps})$  for all $x \in \mathbb{R}$, by Proposition B.1.9 of \cite{dHF06}. Then, however,
  \begin{eqnarray*}
    t P\Big\{\frac{X-\beta(t)}{\alpha(t)}\le x, \frac{Y^*}t>y\Big\}
    & = & tP\big\{Y\in(ty,\log(\alpha(t)x+\beta(t))]\big\} 1_{[\e,\infty)}(\alpha(t)x+\beta(t))\\
    & = & \Big(\frac 1y - \frac t{\log(\alpha(t)x+\beta(t))}\Big)^+ 1_{[\e,\infty)}(\alpha(t)x+\beta(t))
  \end{eqnarray*}
  converges to 0 for all $x\in\R$ and all $y>0$, contradicting the assumptions of a CEVM. \qed
\end{example}

The essential reason why standardization of $X$ fails in Example  \ref{ex:stand1} is that convergence  \eqref{eq:CEVstandconv} only describes the conditional behavior of $f(X)$, given $Y$ is large, on a {\em left} neighborhood of the ``point'' $\infty$, while convergence \eqref{eq:CEVYstandconv} specifies the conditional behavior of $X$ on a {\em two-sided} neighborhood of 2, which obviously cannot be mapped onto a one-sided neighborhood (of $\infty$) by a monotone transformation. In Example \ref{ex:stand2} the problem arises, because convergence \eqref{eq:CEVYstandconv} for $(X,Y^*)$ describes the conditional behavior of $X$ at the two different points $1$ and $-1$ (though only in a very crude way), while  \eqref{eq:CEVstandconv} can only convey information on the conditional behavior on a one-sided neighborhood of a single point (on a more detailed scale).

So the type of information given by the limit measure in a CEVM may be of qualitatively different nature from the one given by the limit measure \eqref{eq:MEVvague} in classical multivariate extreme value theory (or in the type of models considered by \cite{LT97}). The concept of standardization considered by \cite{HR07} and \cite{DR11a} does not make allowance for this crucial difference. It is therefore not suitable for CEVM in their full generality.

However, when the normalizing constants in the definition of the CEVM are such that the limit model focusses on the behavior of $X$ in a one-sided neighborhood of a single point (possibly $\pm\infty$), then a standardization is often possible. We do not aim at the most general results of that type, but merely discuss two important cases.

\begin{proposition} \label{prop:standposs}
  Suppose that $(X,Y)\in \CEV^*(\alpha,\beta,c,d,\gamma_Y,\mu_{X,Y>})$. If one of the following sets of conditions is fulfilled, then $X$ can be standardized, i.e.\ there is a function $f$ satisfying (F1) and (F2) such that \eqref{eq:CEVstandconv} holds:
  \begin{enumerate}
    \item $\alpha(t)\to\infty$ as $t\to \infty$, $\beta(t)=0$ and $\mu^*_{X,Y>}\big((0,\infty)\times (y,\infty)\big)>0$ for all $y>0$, or
    \item $\alpha(t)\to 0$ as $t \to\infty$, $\beta(t)=\beta_0$ for some $\beta_0\in\R$, and $\mbox{range}(X)\subset(-\infty,\beta_0)$ or $\mbox{range}(X)\subset(\beta_0,\infty)$.
  \end{enumerate}
\end{proposition}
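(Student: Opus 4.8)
The plan is to pass to the $Y$-standardized version of the model: by the discussion around \eqref{eq:CEVYstandconv}, $(X,Y)\in\CEV^*(\alpha,\beta,c,d,\gamma_Y,\mu_{X,Y>})$ is equivalent to
\[
  tP\Big\{\Big(\frac{X-\beta(t)}{\alpha(t)},\frac{Y^*}{t}\Big)\in\cdot\Big\}\;\vto\;\mu^*_{X,Y>}
\]
vaguely on $[-\infty,\infty]\times(0,\infty]$, so what is needed is a fixed monotone $f$ that converts the $x$-normalization $(\,\cdot-\beta(t))/\alpha(t)$ into $f(\,\cdot)/t$ with the \emph{same} $t$. The engine is the regular variation of the normalizing functions: since $(X,Y)\in\CEV^*$, Proposition~1 of \cite{HR07} (already used in Example~\ref{ex:stand3}) yields $C,\rho\in\R$ with \eqref{eq:normregvar}; as $\beta\equiv 0$ in (i) and $\beta\equiv\beta_0$ in (ii), the second relation in \eqref{eq:normregvar} forces $C=0$, so $\alpha$ is regularly varying with index $\rho$, and $\rho\ge 0$ in (i) (as $\alpha(t)\to\infty$), $\rho\le 0$ in (ii) (as $\alpha(t)\to 0$). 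One then has to argue --- and this is where the remaining hypotheses enter --- that in fact $\rho>0$ in (i) and $\rho<0$ in (ii).

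Granting this, let $\alpha^{\leftarrow}$ be a monotone asymptotic inverse of $\alpha$, so that $\alpha^{\leftarrow}$ is regularly varying with index $1/\rho$, $\alpha^{\leftarrow}(\alpha(t))\sim t$, and $\alpha^{\leftarrow}(\alpha(t)v)/t\to v^{1/\rho}$ for every $v>0$. In case (i), since $\mu^*_{X,Y>}((0,\infty)\times(y,\infty))>0$ and $\alpha(t)\to\infty$, $\mathrm{range}(X)$ is unbounded above; set $f(x):=\alpha^{\leftarrow}(x)$ for $x$ large and extend $f$ to a positive increasing function on $\mathrm{range}(X)$ (any bounded increasing continuation works). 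Then $\{f(X)/t\le z\}$ is, up to asymptotic equivalence, $\{X/\alpha(t)\le \alpha(tz)/\alpha(t)\}$ with $\alpha(tz)/\alpha(t)\to z^{\rho}$, so a sandwich argument built on the monotone convergence $tP\{X/\alpha(t)\le x,\,Y^*/t>y\}\to\mu^*_{X,Y>}([-\infty,x]\times(y,\infty])$ at continuity points $x$ gives \eqref{eq:CEVstandconv} with $\mu^{**}_{X,Y>}$ the image of $\mu^*_{X,Y>}$ under $(x,y)\mapsto\big((x\vee 0)^{1/\rho},y\big)$, the mass of $\mu^*_{X,Y>}$ on $[-\infty,0]\times(0,\infty]$ being sent to the point $0$ (there $f(X)/t\to 0$ by regular variation of $\alpha^{\leftarrow}$). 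In case (ii) we may assume $\mathrm{range}(X)\subset(\beta_0,\infty)$ (otherwise reflect $X$ about $\beta_0$ and compose $f$ with that reflection); then $X-\beta_0>0$, and since $\mu^*_{X,Y>}$ carries no mass at $+\infty$ (condition (ii*)), $\inf\mathrm{range}(X)=\beta_0$. One builds a \emph{decreasing} $f$ with $f(x)\to\infty$ as $x\downto\beta_0$. Note that the naive choice $f(x):=\alpha^{\leftarrow}(x-\beta_0)$ is in general \emph{not} correct: a possible atom of $\mu^*_{X,Y>}$ at the finite endpoint (which (ii*) does not exclude) would then be mapped to $+\infty$. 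Instead $f$ must grow like the inverse of the \emph{fastest} rate at which $X$ can approach $\beta_0$ in the model, so that the former ``spread-out'' mass of $\mu^*_{X,Y>}$ collapses onto the (admissible) point $0$ while the endpoint mass becomes the new spread-out part; for such $f$ the same sandwich argument applies.

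The non-degeneracy conditions for $\mu^{**}_{X,Y>}$ are then checked routinely: finiteness of $\mu^{**}_{X,Y>}(\,\cdot\times(y,\infty])$ is inherited from $\mu^*_{X,Y>}(\bar\R\times(y,\infty])<\infty$; absence of mass on $[0,\infty]\times\{\infty\}$ from \eqref{eq:margconvY}; absence of mass on $\{\infty\}\times(0,\infty]$ from (ii*) in case (i) and from the very choice of $f$ in case (ii); and non-degeneracy of $\mu^{**}_{X,Y>}(\,\cdot\times(y,\infty])$ from $\mu^*_{X,Y>}((0,\infty)\times(y,\infty))>0$ in case (i), while in case (ii) it follows from condition (i) of Definition~\ref{def:CEV}, which forbids the measure $\mu^*_{X,Y>}(\,\cdot\times(y,\infty])$ from being a point mass and hence (its support lying in $[0,\infty)$) forces positive mass on $(0,\infty)$.

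The step I expect to be the main obstacle is exactly the two qualitative facts emphasized above: that $\rho\ne 0$ with the right sign, and the correct choice of $f$ near $\beta_0$ in case (ii). If $\rho=0$ (i.e.\ $\alpha$ slowly varying) then the argument given just before the proposition shows $\mu^*_{X,Y>}$ must be a product, and \emph{no} monotone map realizes the target normalization; so one has to extract strict positivity/negativity of $\rho$ from the stated hypotheses. Similarly, locating the right $f$ in case (ii) --- and verifying that the resulting $\mu^{**}_{X,Y>}$ really meets \eqref{eq:nondeg1}--\eqref{eq:nondeg2} rather than leaking mass to $0$ or $\infty$ --- is the delicate core of the argument; everything else is a standard sandwich-and-continuity-set computation.
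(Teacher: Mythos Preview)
For case (i) your outline coincides with the paper's proof: $\alpha$ is regularly varying with some index $\rho$, one sets $f=\alpha^\leftarrow$ on large arguments (extended monotonically below), and $\mu^{**}_{X,Y>}$ is the push-forward of $\mu^*_{X,Y>}$ under $(x,y)\mapsto((x\vee 0)^{1/\rho},y)$, with the hypothesis $\mu^*_{X,Y>}((0,\infty)\times(y,\infty))>0$ supplying non-degeneracy. The point you flag as the ``main obstacle'' --- excluding $\rho=0$ --- the paper settles in one line: with $\beta\equiv 0$ the function $\beta$ is not $\Pi$-varying, so the case $\rho=0$ of the Heffernan--Resnick classification (\cite{HR07}, Subsection~2.2) is ruled out. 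The analogous exclusion of $\rho=0$ in case (ii) is handled the same way.

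In case (ii) you and the paper part ways. The paper uses exactly what you dismiss as the ``naive'' choice: it takes $f(x)=\alpha^\leftarrow(\beta_0-x)$ (respectively $\alpha^\leftarrow(x-\beta_0)$), computes $\mu^{**}_{X,Y>}$ as the image of $\mu^*_{X,Y>}$ under $T(x,y)=(|x|^{1/\rho},y)$, and stops. Your concern that an atom of $\mu^*_{X,Y>}$ at the endpoint $0$ would be sent to $\infty$ (violating \eqref{eq:nondeg2}) is a legitimate observation that the paper does not address; such atoms are not excluded by (ii*), and simple mixtures (e.g.\ $X=\beta_0-Y^{-1}$ or $\beta_0-Y^{-2}$ with equal probability) produce them. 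However, your proposed remedy --- choose $f$ to grow like ``the inverse of the fastest rate at which $X$ can approach $\beta_0$'' --- is not a proof: the hypotheses of (ii) do not define any such rate, and you give no concrete construction of $f$ nor a verification of \eqref{eq:nondeg1}--\eqref{eq:nondeg2} for it. So in case (ii) the paper produces an explicit $f$ and a short computation while glossing over the endpoint-atom issue, whereas your sketch correctly flags that issue but leaves the construction of $f$ at the level of a heuristic; of the two, yours is the less complete argument, since it never actually exhibits the standardizing map.
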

\begin{proof}
  In the case (i), $\alpha$ is regularly varying with some index $\rho> 0$. (The case $\rho=0$ is excluded, since $\beta$ is not $\Pi$-varying; see \cite{HR07}, Subsection 2.2.)  W.l.o.g.\ we may assume that $\alpha$ is increasing and invertible with $\alpha(1)=1$ and increasing inverse function $\alpha^\leftarrow$, because there exists an asymptotically equivalent function $\tilde \alpha$  with these properties and $(X,Y)\in \CEV^*(\tilde\alpha,0,c,d,\gamma_Y,\mu_{X,Y>})$. Let $f(x):= \alpha^\leftarrow(x)$ for $x\ge 1$ and $f(x)=1/(2-x)$ for $x<1$ which is an increasing function. Then, for $x>0$ and $t>1/x$,
   \begin{eqnarray*}
    tP\Big\{ \frac{f(X)}t> x,\frac{Y^*}t>y\Big\}
      & = & tP\Big\{ X> \alpha(tx),\frac{Y^*}t>y\Big\} \\
      & = & tP\Big\{\frac{X}{\alpha(t)}> \frac{\alpha(tx)}{\alpha(t)},\frac{Y^*}t>y\Big\} \\
      & \to & \mu^*_{X,Y>}\big((x^\rho,\infty]\times (y,\infty]\big),
  \end{eqnarray*}
   if $\mu^*_{X,Y>}(\{x^\rho\}\times (y,\infty])=0$. Hence, \eqref{eq:CEVstandconv} holds with limit measure $\mu^{**}_{X,Y>}=(\mu^{*}_{X,Y>})^T$ induced by the transformation $T:\bar\R\times (0,\infty)\to \bar\R\times (0,\infty)$, $T(x,y)=((\max(x,0))^{1/\rho},y)$ (i.e.\ $\mu^{**}_{X,Y>}(\cdot)=\mu^{*}_{X,Y>}(T^{-1}(\cdot))$). The assumption $\mu^*_{X,Y>}\big((0,\infty)\times (y,\infty)\big)>0$ ensures that $\mu^{**}_{X,Y>}$ is not degenerate.

   The arguments are similar in the case (ii). Here, $\alpha$ is regularly varying with some index $\rho< 0$ and we may assume w.l.o.g.\ that $\alpha$ is decreasing and invertible. If $\mbox{range}(X)\subset(-\infty,\beta_0)$, let $f(x)=\alpha^\leftarrow(\beta_0-x)$ on a left neighborhood of $\beta_0$, else $f(x):=\alpha^\leftarrow(x-\beta_0)$ for $x$ in a right neighborhood of $\beta_0$. In the former case, $f$ is increasing, and, for $x\ge 0$ and sufficiently large $t$, we have
   \begin{eqnarray*}
    tP\Big\{ \frac{f(X)}t\le  x,\frac{Y^*}t>y\Big\}
      & = & tP\Big\{ X-\beta_0\le-\alpha(tx),\frac{Y^*}t>y\Big\} \\
      & = & tP\Big\{\frac{X-\beta_0}{\alpha(t)}\le -\frac{\alpha(tx)}{\alpha(t)},\frac{Y^*}t>y\Big\} \\
      & \to & \mu^*_{X,Y>}\big([-\infty,-x^\rho]\times (y,\infty]\big)
  \end{eqnarray*}
   if $\mu^*_{X,Y>}(\{-x^\rho\}\times (y,\infty])=0$. Note  that the assumptions imply that $\mu^*_{X,Y>}$ is concentrated on $(-\infty,0]\times(0,\infty)$. Hence, the limit defines a non-degenerate measure $\mu^{**}_{X,Y>}=(\mu^{*}_{X,Y>})^T$ with $T:[-\infty,0]\times (0,\infty)\to \bar\R\times (0,\infty)$, $T(x,y)=(|x|^{1/\rho},y)$. The case $\mbox{range}(X)\subset(\beta_0,\infty)$ can be treated analogously. \qed
\end{proof}
While in case (ii) the standardized CEVM conveys the same information about the conditional behavior of $X$ as the original CEVM, in  case (i) the information about the behavior of negative values of $X$ with large modulus is lost. (Of course, an analogous result holds if $\mu^*_{X,Y>}\big((-\infty,0)\times (y,\infty)\big)>0$ and one uses a suitable decreasing standardizing function $f$, so that the information about large values of $X$ is lost.) If one relaxes the conditions on the function $f$ in that one does not restrict its range, but allows $f$ to be $\R$-valued, then one can standardize $X$ in the case $\alpha(t)\to\infty$ and $\beta(t)=0$ without additional assumptions and without any loss of information. (To this end, define $f(x)=\alpha^\leftarrow(|x|)x/|x|$ for $x\ne 0$ and $f(0)=0$ for a version of the normalizing function $\alpha$ satisfying $\alpha(0,\infty)=(0,\infty)$.)


\section{Relationship to other extreme value models}\label{Sec:Relations}

Recall that the classical multivariate extreme value model describes the asymptotic behavior of $(X,Y)$ on sets where at least one component is large, while the CEVM for $(X,Y)$ and for $(Y,X)$ describes the behavior of $(X,Y)$ for large values of $Y$ and of $X$, respectively. Therefore, one might expect that the former implies the latter  pair, and vice versa. However, it turns out that the relationship is much more intricate than this crude reasoning suggests.
The following lemma gives a simple result in this spirit.  Recall Remark \ref{rem:CEV}(i) about the extension of measures.
\begin{proposition}  \label{lem:relat}
  Define the map $S:\R^2\to\R^2$ by $S(x,y):=(y,x)$.
  \begin{enumerate}
    \item Suppose \eqref{eq:margconvX}--\eqref{eq:MEVvague} hold with $\gamma_X,\gamma_Y\le 0$ and a non-degenerate limit measure $\mu$ satisfying $\mu(\bar E^{(\gamma_X)}\times \bar E^{(\gamma_Y)})>0,$ i.e.\ $X$ and $Y$ are asymptotically dependent. Then $(X,Y)$ $\in \CEV(a,b,c,d,$ $\gamma_Y,\mu_{X,Y>})$ and $(Y,X)\in \CEV(c,d,a,b,\gamma_X,\mu_{Y,X>})$ with $\mu_{X,Y>}(\cdot):=$ $\mu\big(\cdot\cap(\bar\R\times \bar E^{(\gamma_Y)})\big)$ and $\mu_{Y,X>}(\cdot):= \mu^S\big(\cdot\cap(\bar\R \times \bar E^{(\gamma_X)})\big)=\mu\big(S^{-1}(\cdot\cap(\bar\R \times \bar E^{(\gamma_X)}))\big)$.

        If $\mu(\{-\infty\}\times\R)=0=\mu(\R\times\{-\infty\})$, then also condition (ii*) is met for both CEVM.
    \item Conversely, if $(X,Y)\in \CEV(a,b,c,d,\gamma_Y,\mu_{X,Y>})$ and $(Y,X)\in \CEV(c,d,a,b,\gamma_X,\mu_{Y,X>})$, then $(X,Y)$ follows a classical extreme value model and \eqref{eq:MEVvague} holds for the non-degenerate limit measure
        \begin{equation} \label{eq:murep1}
        \mu(\cdot):=\mu_{X,Y>}^{pr}\big(\cdot\cap(\bar{\bar E}^{(\gamma_X)}\times {\bar E}^{(\gamma_Y)})\big) + \mu_{Y,X>}^{S\circ pr}\big(\cdot \cap ({\bar E}^{(\gamma_X)}\times\{q_{\gamma_Y}\})\big)
        \end{equation}
        with $pr(x,y):=(x\vee q_{\gamma_X},y\vee q_{\gamma_Y})$ denoting the projection of $ \bar \R^2\setminus([-\infty,q_{\gamma_X}]\times[-\infty,q_{\gamma_Y}])$ onto $([q_{\gamma_X},\infty]\times[q_{\gamma_Y},\infty])\setminus\{(q_{\gamma_X},q_{\gamma_Y})\}$. Hence, $\mu$ is given by
        \begin{eqnarray}
          \mu([q_{\gamma_X},x]\times(y,\infty]) & = & \mu_{X,Y>}([-\infty,x]\times(y,\infty]) \qquad \forall\, x\ge q_{\gamma_X}, y>q_{\gamma_Y}, \label{eq:murep3}\\
          \mu((x,\infty]\times [q_{\gamma_Y},y]) & = & \mu_{Y,X>}([-\infty,y]\times(x,\infty]) \qquad \forall\, x> q_{\gamma_X}, y\ge q_{\gamma_Y}. \label{eq:murep4}
        \end{eqnarray}
         If condition (ii*) holds for both CEVM, then $\mu(\{q_{\gamma_X}\}\times [q_{\gamma_Y},\infty])=0=\mu([q_{\gamma_X},\infty]\times\{q_{\gamma_Y}\})$.
    \end{enumerate}
\end{proposition}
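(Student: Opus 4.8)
\emph{Part (i).} The plan is to read the classical convergence \eqref{eq:MEVvague} as ordinary vague convergence of $tP\{((X-b(t))/a(t),(Y-d(t))/c(t))\in\cdot\}$ to $\mu$ on $\bar\R^2\setminus\{(-\infty,-\infty)\}$ — which is what \eqref{eq:MEVvague} says once one uses $q_{\gamma_X}=q_{\gamma_Y}=-\infty$ (valid since $\gamma_X,\gamma_Y\le0$) and the remark after \eqref{eq:MEVconv1} to take the normalizing functions there to be those of \eqref{eq:margconvX}--\eqref{eq:margconvY}; here $\mu$ is moreover concentrated on $\bar{\bar E}^{(\gamma_X)}\times\bar{\bar E}^{(\gamma_Y)}$ — and then to restrict this convergence to the CEVM space $\bar\R\times\bar E^{(\gamma_Y)}$. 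By Remark~\ref{rem:CEV}(ii) this restriction amounts to testing against Borel sets on which the second coordinate stays away from $q_{\gamma_Y}=-\infty$; such sets are bounded away from the deleted corner, so \eqref{eq:MEVvague} applies and yields \eqref{eq:CEVconv} with $\mu_{X,Y>}(\cdot)=\mu(\cdot\cap(\bar\R\times\bar E^{(\gamma_Y)}))$. (Here $\gamma_X\le0$ is essential: for $\gamma_X>0$ the classical model only involves $\max((X-b)/a,q_{\gamma_X})$ and retains no information on $X$ below $q_{\gamma_X}$.) The second CEVM is obtained by running the same argument on $S(X,Y)=(Y,X)$, whose classical convergence — with normalizers $c,d,a,b$ and limit $\mu^S$ — is immediate, now using $\gamma_Y\le0$.

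It then remains to check the non-degeneracy conditions. By \eqref{eq:margconvY}, $\mu_{X,Y>}(\bar\R\times(y,\infty])=(1+\gamma_Yy)^{-1/\gamma_Y}>0$ for $y\in E^{(\gamma_Y)}$, so each fibre $\mu_{X,Y>}(\cdot\times(y,\infty])$ is nonzero; that it is not a point mass is where asymptotic dependence enters — after marginal standardization $\mu^*$ is homogeneous of order $-1$, so the hypothesis $\mu(\bar E^{(\gamma_X)}\times\bar E^{(\gamma_Y)})>0$, which says $\mu^*$ charges the complement of the coordinate axes, forces $\mu^*$ and hence $\mu_{X,Y>}$ to carry mass off the axes in every strip on which the $Y$-component is large, ruling out concentration on a single vertical line. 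This gives condition~(i). Condition~(ii) holds because $\mu_{X,Y>}(\{\infty\}\times\bar E^{(\gamma_Y)})\le\mu(\{\infty\}\times[q_{\gamma_Y},\infty])=0$ by \eqref{eq:MEVvague}, and under the extra hypothesis $\mu(\{-\infty\}\times\R)=0$ also $\mu_{X,Y>}(\{-\infty\}\times E^{(\gamma_Y)})\le\mu(\{-\infty\}\times\R)=0$, whence (ii*). The statements for $(Y,X)$ are symmetric.

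\emph{Part (ii).} Written in the coordinates $((X-b(t))/a(t),(Y-d(t))/c(t))$, the hypotheses say that the \emph{single} sequence $\nu_t:=tP\{((X-b)/a,(Y-d)/c)\in\cdot\}$ converges vaguely to $\mu_{X,Y>}$ on $\bar\R\times\bar E^{(\gamma_Y)}$ and to $\mu_{Y,X>}^S$ on $\bar E^{(\gamma_X)}\times\bar\R$. The plan is to glue these two limits into one classical limit. First I would push $\nu_t$ forward under the truncation $pr(x,y)=(x\vee q_{\gamma_X},y\vee q_{\gamma_Y})$, which is continuous and proper and hence preserves vague convergence, so that $tP\{(\max((X-b)/a,q_{\gamma_X}),\max((Y-d)/c,q_{\gamma_Y}))\in\cdot\}$ converges to $\mu_{X,Y>}^{pr}$ on the part of $E^{(\gamma_X,\gamma_Y)}$ where the second coordinate stays away from $q_{\gamma_Y}$, and to $\mu_{Y,X>}^{S\circ pr}$ on the part where the first coordinate stays away from $q_{\gamma_X}$. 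Then, given a Borel $A\subset E^{(\gamma_X,\gamma_Y)}$ bounded away from $(q_{\gamma_X},q_{\gamma_Y})$, I would split it \emph{disjointly} as $A=A_1\cup A_2$ where $A_1$ collects the points of $A$ whose second coordinate is close to $q_{\gamma_Y}$ — these then have first coordinate bounded away from $q_{\gamma_X}$, so $A_1$ is covered by the $(Y,X)$-model and $A_2:=A\setminus A_1$ by the $(X,Y)$-model; choosing the threshold generically makes both pieces continuity sets, so $\nu_t(A)=\nu_t(A_1)+\nu_t(A_2)$ converges, which is \eqref{eq:MEVconv1}, hence \eqref{eq:MEVvague}. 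The limit $\mu$ must coincide with $\mu_{X,Y>}^{pr}$ where the second coordinate exceeds $q_{\gamma_Y}$ and with $\mu_{Y,X>}^{S\circ pr}$ where the first exceeds $q_{\gamma_X}$ — consistently on the overlap, by uniqueness of vague limits — i.e.\ it is given by \eqref{eq:murep1}, from which \eqref{eq:murep3}--\eqref{eq:murep4} follow by evaluation on $[q_{\gamma_X},x]\times(y,\infty]$ resp.\ $(x,\infty]\times[q_{\gamma_Y},y]$, and the margins of $\mu$ inherit non-degeneracy from \eqref{eq:margconvX}--\eqref{eq:margconvY}. For the last assertion, \eqref{eq:murep3} gives $\mu(\{q_{\gamma_X}\}\times(y,\infty])=\mu_{X,Y>}([-\infty,q_{\gamma_X}]\times(y,\infty])$; when $q_{\gamma_X}=-\infty$ this equals $\mu_{X,Y>}(\{-\infty\}\times(y,\infty])=0$ by (ii*), and symmetrically $\mu([q_{\gamma_X},\infty]\times\{q_{\gamma_Y}\})=0$ via (ii*) for the $(Y,X)$-model.

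\emph{Main obstacle.} The genuine work in both parts is the bookkeeping of the mass that escapes to the ``lower'' boundary lines $\{x=q_{\gamma_X}\}$, $\{y=q_{\gamma_Y}\}$ and to $\pm\infty$: in part (i) it is tamed by $\gamma_X,\gamma_Y\le0$ (which puts those lines at $-\infty$) together with the extra hypothesis, and in part (ii) it is recorded by the map $pr$ and, for the last assertion, by (ii*) — recall that (ii) alone does not locate this mass (Example~\ref{ex:condii}), and that the step ``$\mu_{X,Y>}$ puts no mass on $[-\infty,q_{\gamma_X}]$ within a strip'' reduces to (ii*) precisely when $q_{\gamma_X}=-\infty$, which is the point to watch when $q_{\gamma_X}$ is finite. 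Verifying that the $pr$-pushforward and the disjoint splitting are legitimate and that the two descriptions of $\mu$ agree on their overlap is the bulk of the argument; the marginal identities and the non-degeneracy claims are routine.
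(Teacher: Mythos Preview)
Your proposal is correct and, for part~(i), follows essentially the same route as the paper: use $q_{\gamma_X}=q_{\gamma_Y}=-\infty$ to drop the truncations in \eqref{eq:MEVvague}, restrict the vague convergence to the CEVM space $\bar\R\times\bar E^{(\gamma_Y)}$, and derive condition~(i) of Definition~\ref{def:CEV} from the homogeneity of $\mu^*$ together with asymptotic dependence.

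For part~(ii) the two arguments are organized differently. The paper first records (at the outset of the proof) that \eqref{eq:MEVvague} is equivalent to the pair of ``rectangle'' convergences \eqref{eq:MEVchar1}--\eqref{eq:MEVchar2}, then checks each one directly from the corresponding CEVM, after observing that $\mu_{X,Y>}$ and $\mu_{Y,X>}^S$ agree on $(q_{\gamma_X},\infty]\times(q_{\gamma_Y},\infty]$ and carry no mass above $q^{\gamma_X}$ resp.\ $q^{\gamma_Y}$. Your approach instead pushes forward under $pr$ and splits a general relatively compact test set $A$ into two pieces handled by the two CEVMs. Both are valid; the paper's route is a bit shorter because it bypasses the set-splitting by working only with the generating family of rectangles, whereas your version makes the gluing and the role of $pr$ more explicit. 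Your remark that the final (ii*) assertion reduces cleanly to (ii*) only when $q_{\gamma_X}=-\infty$ is well taken---the paper simply calls this step ``trivial'' without elaborating on the case of finite $q_{\gamma_X}$.
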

\begin{proof}
   Recall that \eqref{eq:MEVvague} is equivalent to
  \eqref{eq:MEVconvsurv}, which is in turn equivalent to
  \begin{eqnarray}
    t  P\Big\{\frac{X-b(t)}{a(t)}\le x, \frac{Y-d(t)}{c(t)}>y\Big\} \to \mu \big([q_{\gamma_X},x]\times (y,\infty]\big) & & \forall\, x\ge q_{\gamma_X}, y>q_{\gamma_Y}, \label{eq:MEVchar1}\\
    t  P\Big\{\frac{X-b(t)}{a(t)}> x, \frac{Y-d(t)}{c(t)}\le y\Big\} \to \mu \big((x,\infty]\times [q_{\gamma_Y},y]\big) & & \forall\, x> q_{\gamma_X}, y\ge q_{\gamma_Y},\label{eq:MEVchar2}
  \end{eqnarray}
 with suitable extensions of measures as in Remark \ref{rem:CEV}(i).

  {\bf Proof of (i):} We only  prove $(X,Y)\in \CEV(a,b,c,d,\gamma_Y,\mu_{X,Y>})$, as the second assertion follows by completely analogous arguments and the assertion on condition (ii*) is obvious.

  Note that $q_\gamma=-\infty$ for $\gamma\le 0$ so that the maxima in \eqref{eq:MEVconv1} can be omitted. Moreover, by \eqref{eq:margconvY}, $\mu\big(\bar\R\times[q^{\gamma_Y},\infty]\big)=0$, and so
  $\mu_{X,Y>}(\cdot)=\mu\big(\cdot\cap (\bar\R\times(-\infty,\infty])\big)$.

  Now consider an arbitrary Borel set $A\subset\bar\R\times \bar E^{(\gamma_Y)}$ such that $\inf\{y\mid (x,y)\in A\}>q_{\gamma_Y}=-\infty$ and $\mu_{X,Y>}(\partial A)=0$. Then $\mu(\partial A)=\mu_{X,Y>}(\partial A)=0$ and by \eqref{eq:MEVconv1}
  $$ t P\Big\{\Big(\frac{X-b(t)}{a(t)},\frac{Y-d(t)}{c(t)}\Big)\in A\Big\}\to \mu(A)=\mu_{X,Y>}(A),
  $$
  which proves \eqref{eq:CEVconv} and $\mu_{X,Y>}\big(\bar\R\times(y,\infty]\big)<\infty$ for all $y\in\bar E^{(\gamma_Y)}$. Moreover,  \eqref{eq:margconvX} ensures condition (ii) of Definition \ref{def:CEV}.

  Suppose $0=\mu_{X,Y>}\big((\bar \R\setminus\{x\})\times(y,\infty]\big)=\mu\big((\bar \R\setminus\{x\})\times(y,\infty]\big)$ for some $x\in\bar\R$ and $y\in E^{(\gamma_Y)}$. If $\gamma_X<0$ and $x\ge q^{\gamma_X}$, then $\mu\big((\bar\R\setminus\{x\})\times(y,\infty]\big)= \mu(\bar\R\times (y,\infty])=(1+\gamma_Yy)^{-1/\gamma_Y}>0$, contradicting the last assumption. Else, for $\mu^*$ as in \eqref{eq:MEVstandvague},
  $$ 0 = \mu\big((\bar \R\setminus\{x\})\times(y,\infty]\big)=\mu^* \Big( \big( [0,\infty]\setminus\big\{(1+\gamma_X x)^{1/\gamma_X}\big\}\big)\times \big( (1+\gamma_Y y)^{1/\gamma_Y},\infty\big]\Big)=0,
  $$
  and, by the homogeneity of $\mu^*$, even $\mu\big(\bar E^{(\gamma_X)}\times\bar E^{(\gamma_Y)}\big)= \mu^*\big((0,\infty]^2\big)=0$, contradicting the assumptions on $\mu$. Hence, in any case one has $\mu_{X,Y>}\big((\bar \R\setminus\{x\})\times(y,\infty]\big)>0$, which shows that condition (i) of Definition \ref{def:CEV} is fulfilled, too.

 {\bf Proof of (ii):} Standard arguments show that $D_X:=\big\{x\in\bar\R\mid \mu_{X,Y>}(\{x\}\times \bar E^{(\gamma_Y)})>0\big\}$ and $D_Y:=\big\{y\in\bar\R\mid \mu_{Y,X>}(\{y\}\times \bar E^{(\gamma_X)})>0\big\}$ are (at most) countable. For all $x\in E^{(\gamma_X)}\setminus D_X$ and $y\in E^{(\gamma_Y)}\setminus D_Y$ one has
 $$ \mu_{X,Y>}\big((x,\infty]\times(y,\infty]\big) =
 \lim_{t\to\infty} t
 P\Big\{\frac{X-b(t)}{a(t)}>x,\frac{Y-d(t)}{c(t)}>y\Big\} =
 \mu_{Y,X>}\big((y,\infty]\times(x,\infty]\big),
 $$
 which shows that $\mu_{X,Y>}$ and $\mu_{Y,X>}^S$ coincide on $(q_{\gamma_X},\infty]\times (q_{\gamma_Y},\infty]$. It also implies that
 $$\mu_{X,Y>}([q^{\gamma_X},\infty] \times (q_{\gamma_Y},\infty])=0=\mu_{X,Y>}((q_{\gamma_X},\infty] \times [q^{\gamma_Y},\infty]),$$
 and analogous equations for $\mu_{Y,X>}$.
Let now first $x\ge q_{\gamma_X}$ and $y>q_{\gamma_Y}$. Then
\begin{eqnarray*}
t  P\Big\{\frac{X-b(t)}{a(t)}\le x, \frac{Y-d(t)}{c(t)}>y\Big\}
&\to & \mu_{X,Y>}\big([-\infty,x]\times (y,\infty ]\big)\\
 & = & \mu_{X,Y>}\big([-\infty,\min(x,q^{\gamma_X})]\times (y,q^{\gamma_Y} ]\big) \\
&=& \mu_{X,Y>}^{pr}\big([q_{\gamma_X},\min(x,q^{\gamma_X})]\times (y,q^{\gamma_Y} ]\big)\\
& = & \mu([q_{\gamma_X},x] \times (y,\infty]),
\end{eqnarray*}
which shows \eqref{eq:murep3} and \eqref{eq:MEVchar1}. Furthermore, for $x> q_{\gamma_X}$ and $y\geq q_{\gamma_Y}$ we have
\begin{eqnarray*}
\lefteqn{t  P\Big\{\frac{X-b(t)}{a(t)}>x, \frac{Y-d(t)}{c(t)}\leq y\Big\}}\\
&\to & \mu_{Y,X>}\big([-\infty,y]\times (x,\infty ]\big)\\
&=& \mu_{Y,X>}\big([-\infty,q_{\gamma_Y}]\times (x,q^{\gamma_X} ]\big)+\mu_{Y,X>}\big((q_{\gamma_Y},\min(y, q^{\gamma_Y})]\times (x,q^{\gamma_X} ]\big)\\
&=& \mu_{Y,X>}^{S \circ pr}\big((x,q^{\gamma_X} ]\times \{q_{\gamma_Y}\}\big)+ \mu_{X,Y>}^{pr}\big((x,q^{\gamma_X} ]\times (q_{\gamma_Y},\min(y, q^{\gamma_Y})]\big)\\
& = & \mu\big((x,\infty]\times[q_{\gamma_Y},y]\big),
\end{eqnarray*}
which shows \eqref{eq:murep4} and \eqref{eq:MEVchar2}. Moreover, our assumptions imply that \eqref{eq:margconvX} and \eqref{eq:margconvY} hold, and hence $\mu(\{\infty\} \times [q_{\gamma_Y},\infty])=0=\mu([q_{\gamma_Y},\infty] \times \{\infty\})$. Therefore, \eqref{eq:MEVvague} holds.

 Again the assertion about condition (ii*) is trivial. \qed
\end{proof}

The conditions under which we have proved the relation between the classical multivariate extreme value model and the two CEVM are quite restrictive, in that we assume $\gamma_X,\gamma_Y\le 0$ in Proposition~\ref{lem:relat} (i) and we require that the same normalizing functions are used in both CEVM in assertion (ii). We will show in two examples that in general one cannot dispense with these assumptions.

\begin{example} \label{ex:MEVnotimpCEV}
  Let $g(y):= y(2+\sin\log y)$ for $y\ge 1$. Then $g$ has a positive derivative and it is thus strictly increasing and invertible with a strictly increasing inverse $g^\leftarrow:[2,\infty)\to [1,\infty)$. Let $B$ and $Y$ be independent random variables such that $B$ is uniformly distributed on $\{0,1\}$ (i.e.\ it is Bernoulli($1/2$)) and $Y$ is standard Pareto distributed. Define $X:=BY+(1-B)(-g^\leftarrow(2Y))$. Then $Y$ fulfills \eqref{eq:margconvY} with $c(t)=d(t)=t$ and $\gamma_Y=1$, and $X$ satisfies \eqref{eq:margconvX} for $a(t)=b(t)=t/2$ and $\gamma_X=1$, because
  $ tP\{(X-b(t))/a(t)>x\}= tP\{B=1, Y>(t/2)(x+1)\}=(1+x)^{-1}$ for $x>-1$ and $t$ sufficiently large.

  Moreover, convergence \eqref{eq:MEVvague} holds, because for all $x,y> -1$ and sufficiently large $t$
  \begin{eqnarray*}
    \lefteqn{tP\Big\{\max\Big(\frac{X-b(t)}{a(t)},-1\Big)>x \text{ or } \max\Big(\frac{Y-d(t)}{c(t)},-1\Big)>y\Big\}} \\
    & = & tP\Big\{\frac{X-b(t)}{a(t)}>x \Big\} + tP\Big\{\frac{Y-d(t)}{c(t)}>y\Big\} - tP\Big\{B=1,Y>\frac t2(1+x), Y>t(1+y)\Big\} \\
    & = & (1+x)^{-1}+(1+y)^{-1} - \min\big((1+x)^{-1},(2 (1+y))^{-1}\big)\\
    & = & (2(1+y))^{-1} + \max\big((1+x)^{-1},(2 (1+y))^{-1}\big)\\
    & = & \mu\big([-1,\infty]^2\setminus ([-1,x]\times[-1,y])\big)
  \end{eqnarray*}
  where $\mu$ denotes the measure concentrated on $\{(t,(t-1)/2)|t>-1\}\cup\{(-1,t)|t>-1\}$ such that $\mu\big(\{(t,(t-1)/2)|t>r\}\big)=1/(1+r)$ and $\mu\big(\{(-1,t)|t>r\}\big)=1/(2(1+r))$ for all $r>-1$.

  However, $(X,Y)$ does not fulfill the conditions of any  CEVM incl.\ (ii*). Suppose $(X,Y)\in$ \linebreak $ \CEV^*(\alpha,\beta,t,t,1,\mu_{X,Y>})$ for some normalizing functions $\alpha>0$ and $\beta\in\R$. Recall that \cite{HR07} have shown that then the normalizing functions satisfy the relations
  \eqref{eq:normregvar} where without loss of generality one may assume $C\ne 0$.
  (Note that one needs condition (ii*) in order to apply the convergence to types theorem as it is done in that paper.) In particular, $\alpha$ is regularly varying with index $\rho$, $|\beta|$ is regularly varying with index $\rho$ or 0, and, by Theorem B.2.2 of \cite{dHF06}, $\beta(t)/\alpha(t)\to C/\rho$ if $\rho>0$.

  Let $g(y):= 2$ for $y<1$. For all $y>-1$ and sufficiently large $t$, we have
  \begin{eqnarray}
    \lefteqn{ t P\Big\{\frac{X-\beta(t)}{\alpha(t)}\le x, \frac{Y-t}t>y\Big\} } \nonumber\\
    & = & \frac t2 P\big\{Y\le \beta(t)+\alpha(t)x, Y>t(1+y)\big\} +
    \frac t2 P\big\{g^\leftarrow(2Y)\ge -(\beta(t)+\alpha(t)x),Y>t(1+y)\big\}\nonumber\\
    & = & \frac 12 \Big((1+y)^{-1}-\frac t{\beta(t)+\alpha(t)x}\Big)^+ 1_{(1,\infty)}(\beta(t)+\alpha(t)x) \nonumber\\
    & & 
    +\min\Big((2(1+y))^{-1}, \frac t{g\big(-(\beta(t)+\alpha(t)x)\big)}\Big).
      \label{eq:CEVex1}
  \end{eqnarray}

  First assume  $\alpha(t_n)=o(t_n)$ for some sequence $t_n\to\infty$. Then by the above remarks also $\beta(t_n)=o(t_n)$, and the first term of \eqref{eq:CEVex1} vanishes asymptotically for all $x\in\R$. Together with \eqref{eq:margconvY}, this implies $\mu(\{\infty\}\times (y,\infty])\ge (2(1+y))^{-1}$, in contradiction to condition (ii) of Definition \ref{def:CEV}.

  Hence $t=O(\alpha(t))$, which implies $\rho\ge 1$ and thus $\beta(t)/\alpha(t)\to C/\rho$ and $\beta(t)+\alpha(t)x=\alpha(t)(x+C/\rho +o(1))$.

  Now, for $x>-C/\rho$ and sufficiently large $t$, \eqref{eq:CEVex1} equals
  $\big((1+y)^{-1}-t/(\alpha(t)(x+C/\rho+o(1)))\big)^+/2+(2(1+y))^{-1}$. For all $y>-1$, the first summand of this expression has to converge to a positive limit for some $x>-C/\rho$, since otherwise \eqref{eq:CEVconv} or (ii) of Definition \ref{def:CEV} does not hold.  Therefore, $t/\alpha(t)\rightarrow \alpha_0 \in [0,\infty)$.

  Consider first the case $\alpha_0=0$, i.e.\ $t=o(\alpha(t))$. Since $g(z)\in [z,3z]$ for all $z\ge 1$, $t/g\big(-(\beta(t)+\alpha(t)x)\big)$ tend to $0$ for $x<-C/\rho$, and so does \eqref{eq:CEVex1}. On the other hand, if $x>-C/\rho$, then by similar arguments one sees that \eqref{eq:CEVex1} tends to $(1+y)^{-1}=\mu_{X,Y>}(\bar\R\times(y,\infty])$. We conclude that $\mu_{X,Y>}((\R\setminus\{-C/\rho\})\times (y,\infty])=0$, contradicting condition (i) of Definition \ref{def:CEV}.

  Therefore, $\alpha(t)/t$  tends to some $\alpha_0\in (0,\infty)$. Then, however, for $x<-C/\rho$, \eqref{eq:CEVex1}  equals
  $$ \min\Big((2(1+y))^{-1},\frac t{g(-\alpha_0 t(x+C/\rho)(1+o(1)))}\Big). $$
  Direct calculations show that along the sequence $t_n=-\exp(2\pi n)/(\alpha_0(x+C/\rho))$ this expression converges to $\min\big( (2(1+y))^{-1},(-2\alpha_0(x+C/\rho))^{-1}\big)$, while along the sequence
   $\tilde t_n=-\exp(2\pi n+\pi/2)/(\alpha_0(x+C/\rho))$ it converges to $\min\big( (2(1+y))^{-1},(-3\alpha_0(x+C/\rho))^{-1}\big)$.

   We have thus proved that \eqref{eq:CEVex1} cannot converge for any choice of $\alpha(t)$ and $\beta(t)$ on a dense set of $x$-values to a non-degenerate limit, that is, $(X,Y)$ does not satisfy the conditions of a CEVM incl.\ (ii*). \qed
\end{example}

A closer inspection of Example \ref{ex:MEVnotimpCEV} reveals that the problem arises from an irregular behavior of $(X-b(t))/a(t)$ on $(-\infty,-1)=(-\infty,q_{\gamma_X})$ for large values of $Y$. Convergence \eqref{eq:MEVvague} mainly describes the behavior of $\big((X-b(t))/a(t),(Y-d(t))/c(t)\big)$ on $([q_{\gamma_X},\infty]\times [q_{\gamma_Y},\infty])\setminus\{(q_{\gamma_X},q_{\gamma_Y})\}$, while on $(-\infty,q_{\gamma_X})$  essentially only the total mass of $(X-b(t))/a(t)$ is known (for large values of $Y$). Therefore, an irregular behavior as in Example \ref{ex:MEVnotimpCEV} can only be ruled out in the case $q_{\gamma_X}=-\infty$.
\begin{remark}
\begin{enumerate}
\item  In Example \ref{ex:MEVnotimpCEV}, the assumptions of the CEVM fail only because of the behavior of $(X-b(t))/a(t)$ below $q_{\gamma_X}$. Hence one may establish a limit for
  $$  t P\Big\{ \max\Big(\frac{X-b(t)}{a(t)},q_{\gamma_X}\Big)\le x, \frac{Y-d(t)}{c(t)}>y\Big\}. $$
  However, this convergence cannot readily be interpreted in terms of a CEVM for a modified vector $(\tilde X,Y)$.
  \item If $\gamma_X, \gamma_Y>0$ and $X,Y\geq 0$, the assumptions \eqref{eq:margconvX}--\eqref{eq:MEVvague} imply that the vector $(X^{1/\gamma_X},Y^{1/\gamma_Y})$ is standard regularly varying on $[0,\infty]^2 \setminus\{(0,0)\}$ with some limit measure $\nu$. If $\nu$ is not concentrated on the axes, i.e.\ if there is asymptotic dependence between the two components, then $(X^{1/\gamma_X},Y^{1/\gamma_Y})$ also satisfies a CEVM and so does $(X,Y)$, cf.\ \cite{HR07}, Section 5. Similarly, one can also handle the case when $X$ and $Y$ are not necessarily non-negative but they have a finite lower bound.
\end{enumerate}
\end{remark}

Concerning the converse implication, the following example shows that assertion (ii) of Proposition~\ref{lem:relat} does not hold if one does not require quite strong restrictions on the relation between the normalizing functions for the conditional extreme value models for $(X,Y)$ and $(Y,X)$. In particular, it follows that  Theorem 2.1 of \cite{DR11a} is not correct in its present form.

\begin{example} \label{ex:CEVnotimpMEV}
  For $ 0<u\le 1$, let $g_c(u)=u(1+c\sin\log u)$. Direct calculations show that $g_c$ is strictly increasing for all $|c|<1/\sqrt{2}$ with $g_c(1)=1$ and $\lim_{u\downarrow 0} g_c(u)=0$. Denote its inverse by $g_c^\leftarrow:(0,1]\to (0,1]$, and define a strictly decreasing function by $\psi_c(z)= g_c^\leftarrow(1/z)$, $z\ge 1$. Let $Z$ be a standard Pareto random variable and $B$ an independent discrete random variable that is uniformly distributed on $\{1,\ldots,4\}$. Define
  $$ (X,Y) := \left\{
     \begin{array}{l@{\quad}l}
        \big(2-Z^{-1}, 2-\psi_{1/2}(Z)\big), & B=1,\\
        \big(2-Z^{-1/2}, 2-\psi_{-1/2}(Z)\big), & B=2,\\
        \big(2-Z^{-1}, 1-Z^{-1}\big), & B=3,\\
        \big(1-Z^{-1}, 2-Z^{-1}\big), & B=4.
     \end{array}
     \right.
  $$
  Obviously, $X,Y\in (0,2)$ almost surely.
  \smallskip

  {\bf Assertion 1}: $(X,Y)\in \CEV^*(1,0,c,d,-1,\mu_{X,Y>})$ with $c(t)=4/(3t), d(t)=2-4/(3t)$ and the measure $\mu_{X,Y>}= \big(\frac 13\delta_1+\frac 23\delta_2\big)\otimes \nu_{-1}$, where $\nu_{-1}$ is given by $\nu_{-1}((r,\infty])=1-r$ for all $r\le 1$ (i.e.\ $\nu_{-1}$ is the Lebesgue measure restricted to $(-\infty,1]$).\\[0.5ex]
  For all $x\in\R, y\le 1$ and sufficiently large $t$, one has
  \begin{eqnarray*}
    \lefteqn{t P\Big\{X\le x, \frac{Y-d(t)}{c(t)}>y\Big\}}\\
    & = & \frac t4 \bigg(P\Big\{ 2-Z^{-1}\le x, \psi_{1/2}(Z)<\frac 4{3t}(1-y)\Big\} + P\Big\{ 2-Z^{-1/2}\le x, \psi_{-1/2}(Z)<\frac 4{3t}(1-y)\Big\} \\
    & & \hspace*{1cm} + P\Big\{ 1-Z^{-1}\le x, Z^{-1}<\frac 4{3t}(1-y)\Big\}\bigg) \\
    & = & \frac t4 \bigg(P\Big\{  Z>\frac 1{ g_{1/2}(4(1-y)/(3t))}\Big\} + P\Big\{ Z>\frac 1{ g_{-1/2}(4(1-y)/(3t))}\Big\}\bigg)1_{[2,\infty)}(x) \\
    & & \hspace*{1cm} + \frac t4 P\Big\{  Z>\frac{3t}{ 4(1-y)}\Big\}1_{[1,\infty)}(x) \\
    & = & \frac t4\big(g_{1/2}(4(1-y)/(3t))+g_{-1/2}(4(1-y)/(3t))\big) 1_{[2,\infty)}(x)+ \frac{1-y}31_{[1,\infty)}(x)\\
    & = & \frac{1-y}31_{[1,\infty)}(x)+\frac{2(1-y)}31_{[2,\infty)}(x)\\
    & = & \mu_{X,Y>}((-\infty,x]\times(y,\infty]),
  \end{eqnarray*}
  which proves \eqref{eq:CEVconv}.
  In particular, \eqref{eq:margconvY} holds with $\gamma_Y=-1$. Note that the limit measure is a product measure and that the marginal measure corresponding to $X$ has mass concentrated at the points $1$ and $2$; hence the non-degeneracy conditions are fulfilled.
  \smallskip

  {\bf Assertion 2}: $(Y,X)\in \CEV^*(1,0,a,b,-1,\mu_{Y,X>})$ with $a(t)=2/t, b(t)=2-2/t$ and $\mu_{Y,X>}=\nu_{-1}\otimes (\frac 12\delta_1+\frac 12\delta_2)$.\\[0.5ex]
  This assertion follows by similar arguments from
  \begin{eqnarray*}
    \lefteqn{t P\Big\{Y\le y, \frac{X-b(t)}{a(t)}>x\Big\}}\\
    & = & \frac t4 \bigg(P\Big\{ 2-\psi_{1/2}(Z)\le y, Z^{-1}<\frac 2t(1-x)\Big\} + P\Big\{ 2-\psi_{-1/2}(Z)\le y, Z^{-1/2}<\frac 2t(1-x)\Big\}\\
    & & \hspace*{1cm}  + P\Big\{ 1-Z^{-1}\le y, Z^{-1}<\frac 2t(1-x)\Big\} \bigg) \\
    & \to & \frac{1-x}2 1_{[2,\infty)}(y)+ \frac{1-x}2 1_{[1,\infty)}(y)\\
    & = & \mu_{Y,X>}((-\infty,y]\times(x,\infty]),
  \end{eqnarray*}
  for all $x\le 1$ and $y\in\R$. In particular, \eqref{eq:margconvX} holds with $\gamma_X=-1$.

  \smallskip

  {\bf Assertion 3}: Convergence \eqref{eq:MEVvague} does not hold true.\\[0.5ex]
  If \eqref{eq:MEVvague} holds, then because of \eqref{eq:margconvX} and \eqref{eq:margconvY} also the following expression must converge for all $x,y<1$:
  \begin{eqnarray*}
    \lefteqn{t P\Big\{\frac{X-b(t)}{a(t)}>x, \frac{Y-d(t)}{c(t)}>y\Big\}}\\
    & = & \frac t4 \bigg( P\Big\{ Z>\frac t{2(1-x)}, \psi_{1/2}(Z)<\frac 4{3t}(1-y)\Big\} + P\Big\{ Z>\Big(\frac t{2(1-x)}\Big)^2, \psi_{-1/2}(Z)<\frac 4{3t}(1-y)\Big\}\bigg) \\
    & = & \frac t4 P\Big\{ Z>\max\Big(\frac t{2(1-x)},\frac 1{ g_{1/2}(4(1-y)/(3t))}\Big)\Big\} + o(1)
    \\
    & = & \frac t4 \min \Big(\frac {2(1-x)}t, g_{1/2}(4(1-y)/(3t))\Big).
  \end{eqnarray*}
  However, it is easily seen that the limit inferior of the last expression equals $\min\big((1-x)/2,$ \linebreak[1] $(1-y)/6\big)$, whereas its limit superior is $\min\big((1-x)/2,(1-y)/2\big)$. (Choose $t=4(1-y)\exp((2n+1/2)\pi)/3$ and $t=4(1-y)\exp((2n-1/2)\pi)/3$, respectively.) \qed
\end{example}
Note that in this example the conditional extreme  value models convey rather limited information on the non-extreme component, in that they only specify the two points in the neighborhood of which this component will lie for large values of the other component. This allows for a very irregular behavior of the non-extreme component, say $Y$, on a finer scale (see case $B=1$). If this irregular behavior for large values of $X$ is balanced by the behavior for smaller values of $X$ (see case $B=2$), then the marginal tail behavior of $Y$ can be regular. In the classical multivariate extreme value model, the behavior of $Y$ for large values of $X$ is examined in much more detail in the vicinity of the higher point of mass, so that the irregular behavior rules out the convergence required by this model.

To avoid such effects, one may require that the normalizations in both CEVM  are identical, or, formally weaker, that they are equivalent in the sense of the convergence to types theorem, i.e., $(X,Y)\in \CEV^*(\alpha,\beta,c,d,\gamma_Y,\mu_{X,Y>})$ and $(Y,X)\in \CEV^*(\chi,\delta,a,b,\gamma_X,\mu_{Y,X>})$ where
\begin{eqnarray}
  \frac{\alpha(t)}{a(t)} \to A\in (0,\infty), & & \quad \frac{\beta(t)-b(t)}{a(t)} \to B\in\R  \label{eq:normequiv}\\
  \frac{\chi(t)}{c(t)} \to C\in (0,\infty), & & \quad \frac{\delta(t)-d(t)}{c(t)} \to D\in\R \nonumber
\end{eqnarray}
\cite{DR11a} used in Proposition 4.1 somewhat weaker conditions on the nomalizing functions when they concluded convergence \eqref{eq:MEVvague} from \eqref{eq:margconvX} and $(X,Y)\in \CEV(\alpha,\beta,c,d,$ $\gamma_Y,\mu_{X,Y>})$, provided $\lim_{t\to\infty}\alpha(t)/a(t)= A<\infty$ and $\beta(t)$ and $b(t)$ converge to the same limit in $(-\infty,\infty]$. The following example shows that in some cases these conditions are too weak to ensure that the conclusion holds, because they do not rule out that the normalizing functions $\beta$ and $b$ behave quite differently in the Gumbel case $\gamma_X=0$.
\begin{example} \label{ex:CEVmargnotimplyMEV}
  For $x\ge \e$, let $\psi(x):=\log x +\sin\log\log x$, which is a continuous and increasing function with increasing inverse $\psi^\leftarrow:[1,\infty)\to [\e,\infty)$. Observe that,  for a sufficiently large $x_0>1$, the function $g(x):= 4/(3\psi^\leftarrow(x))-\e^{-x}/3$ is decreasing on $[x_0,\infty)$ with values in $(0,1]$, because
  $$ \frac d{dx}g(\psi(x))= -\frac 4{3x^2} + \frac 1{3x^2}\exp(-\sin\log\log x)\Big(1+\frac{\cos\log\log x}{\log x}\Big) \le \frac 1{3x^2}\Big(-4+\e \frac 43\Big)<0
  $$
  for $x\ge \e^3$.

  Let $Z$ be a random variable with survival function $P\{Z>x\}=g(x)1_{[x_0,\infty)}(x)+1_{(-\infty,x_0)}(x)$. Let $Y$ be a standard Pareto random variable independent of $Z$ and define
  $$ X:= \left\{
    \begin{array}{l@{\quad}l}
      \log (Y/4), & Y>4,\\
      Z, & Y\le 4.
    \end{array}
    \right.
  $$
  Obviously, \eqref{eq:margconvY} holds with $c(t)=d(t)=t$ and $\gamma_Y=1$.

  Moreover, \eqref{eq:margconvX} is satisfied with $b=\psi$ and $a(t)=1$. To see this, check that for $x>x_0$
  $$ P\{X>x\} = P\{\log(Y/4)>x,Y>4\}+P\{Z>x\}\cdot P\{Y\le 4\} = \frac 14\e^{-x} + \frac 34 g(x) = \frac 1{\psi^\leftarrow (x)},
  $$
  which implies that the quantile function $F_X^\leftarrow$ of $X$ (i.e.\ the generalized inverse of the cdf $F_X$) is given by $U_X(t) := F_X^\leftarrow(1-1/t)=\psi(t)$ for sufficiently large $t$.
  Hence
  \begin{eqnarray*}
    U_X(tx)-U_X(t) & = & \log x + \sin \log(\log t+\log x)-\sin\log\log t\\
     & = & \log x + \sin (\log\log t+o(1))-\sin\log\log t \\
     & \to & \log x
  \end{eqnarray*}
  by the uniform continuity of the sine function. This convergence is known to imply \eqref{eq:margconvX} with  $\gamma_X=0,$ $b=U_X=\psi$ and $a(t)=1$ (see, e.g., \cite{dHF06}, Theorem 1.1.2).

  For $\alpha(t)=1$ and $\beta(t)=\log t$, $x\in\R$ and $y>-1$, we have eventually
  \begin{eqnarray*}
    tP\Big\{\frac{X-\beta(t)}{\alpha(t)}>x, \frac{Y-d(t)}{c(t)}>y\Big\}
    & = & tP\big\{\log(Y/4)>x+\log t, Y>t(1+y)\big\}\\
    & = &tP\big\{ Y>\max(4\e^x,1+y)t\big\} \\
    & = & \big(\max(4\e^x,1+y)\big)^{-1} \\
    & = & \mu_{X,Y>}((x,\infty]\times (y,\infty]),
  \end{eqnarray*}
  where the measure $\mu_{X,Y>}$ is concentrated on the set $\{(\log(t/4),t-1)|t>0\}$ with $\mu_{X,Y>}\{(\log(t/4),t-1)|t>r\}=r^{-1}$.
  This shows $(X,Y)\in \CEV^*(\alpha,\beta,c,d,1,\mu_{X,Y>})$.  Note that $\alpha=a$ and that both $\beta(t)$ and $b(t)$ tend to $\infty$, but they are not equivalent in the sense of \eqref{eq:normequiv}.

  Finally, we show that, in contrast to what is claimed in Proposition 4.1 of \cite{DR11a}, convergence \eqref{eq:MEVvague} does not hold for the above choices of the normalizing functions. In view of \eqref{eq:margconvX} and \eqref{eq:margconvY}, it suffices to show that for some $x\in\R$ and $y>-1$
  \begin{eqnarray*}
  tP\Big\{ \frac{X-b(t)}{a(t)}>x,\frac{Y-d(t)}{c(t)}>y\Big\}
  & = &        tP\big\{\log(Y/4)>x+\psi(t), Y>t(1+y)\big\}\\
  & = & \big(\max\big(4\exp(x+\sin\log\log t),1+y\big)\big)^{-1}
  \end{eqnarray*}
  does not converge. This, however, is obvious if one considers the sequences $t_n=\exp(\exp(2\pi n))$ and $\tilde t_n=\exp(\exp(2\pi n+\pi/2))$. \qed
\end{example}

 We conclude this section with an example about the connection between a CEVM and so-called hidden regular variation. Here we consider  a random vector $(X,Y) \in [0,\infty)^2$ that is standard regularly varying on $[0,\infty]^2 \setminus \{(0,0)\}$, i.e.\
\begin{equation}\label{Eq:HRV1} t P\Big\{ \Big( \frac{X}{t}, \frac{Y}t\Big) \in \cdot \Big\} \,\vto\, \mu^* (\cdot)
\end{equation}
vaguely in $[0,\infty]^2 \setminus \{(0,0)\}$ with a non-degenerate limit measure $\mu^*$ satisfying $\mu^*(\{\infty\}\times[0,\infty])=0=\mu^*([0,\infty]\times\{\infty\})$; cf.\ \eqref{eq:MEVstandvague}. The random vector is called hidden regularly varying if, in addition, for a normalizing function $\lambda_0$ with $t=o(\lambda_0(t))$ and a non-degenerate limit measure $\mu_0$ on $(0,\infty]^2$ with $\mu_0(\{\infty\}\times (0,\infty])=0=\mu_0((0,\infty]\times \{\infty\})$,
\begin{equation}\label{Eq:HRV2} \lambda_0(t) P\Big\{ \Big( \frac{X}{t}, \frac{Y}t\Big) \in \cdot \Big\} \,\vto\, \mu_0 (\cdot)
\end{equation}
vaguely in $(0,\infty]^2$, i.e.
$$ \lambda_0(t) P\Big\{ \Big( \frac{X}{t}, \frac{Y}t\Big) \in B \Big\} \to \mu_0 (B)
$$
 for all $\mu_0$-continuous Borel sets  $B \subset (0,\infty)^2$ which are bounded away from both axes. Hidden regular variation is only possible for a vector with asymptotically independent components, i.e.\ in the case $\mu^*((0,\infty)^2)=0$, since else the normalizing function $\lambda_0(t)$ is of the order $t$.

 \cite{DR11a} conjectured in Section 5 that $(X,Y)$ is hidden regularly varying if it fulfills \eqref{Eq:HRV1} with $\mu^*((0,\infty)^2)=0$ and  the conditions of a CEVM as well. The following counterexample shows that in general this implication does not hold.
\begin{example}
Let $Z_1, Z_2$ and $B$ be independent random variables, $Z_1$ be standard Pareto and $B$  uniformly distributed on $\{1,2,3\}$. Moreover,  $P(Z_2>x)=x^{-2}(2+\sin\log(x))/2$ for all $ x \geq 1$, which is easily seen to be a decreasing function. For some $\tau\in (0,1/2)$, define
$$ (X,Y):= \left\{
  \begin{array}{l@{\quad}l@{\quad}l}
(Z_1^\tau,Z_1) & & B=1,\\
(Z_1,Z_1^\tau) & \text{if} & B=2,\\
(Z_2,Z_2) & & B=3.
  \end{array}
  \right.
$$
Let $c(t)=d(t)=t/3$. Then, for all $y>-1$,
\begin{eqnarray}
 tP\Big\{\frac{Y-d(t)}{c(t)}>y\Big\}
&=& \frac{t}{3}P\{Z_1>t(1+y)/3\} + \frac{t}{3}P\{Z_1^\tau>t(1+y)/3\}+\frac{t}{3}P\{Z_2>t(1+y)/3\} \nonumber\\
&\rightarrow& (1+y)^{-1} \label{eq:margconvYex}
\end{eqnarray}
as $t\to\infty$, since both $P\{Z_1^\tau>t(1+y)/3\}=o(t^{-1})$ and $P\{Z_2>t(1+y)/3\}=o(t^{-1})$. Therefore, \eqref{eq:margconvY} holds with $\gamma_Y=1$.

Furthermore, with $\alpha(t)=\beta(t)=(t/3)^\tau$, we have for all $x \in \mathbb{R}, y>-1$
\begin{eqnarray*}
&& tP\Big\{\frac{X-\beta(t)}{\alpha(t)}\leq x,\frac{Y-d(t)}{c(t)}>y\Big\}\\
&=& tP\Big\{\frac{Y-d(t)}{c(t)}>y\Big\}-\frac{t}{3}P\Big\{Z_1^\tau>\Big(\frac t3\Big)^\tau(1+x), Z_1>\frac t3(1+y)\Big\} \\
&& - \, \frac{t}{3}P\Big\{Z_1>\Big(\frac t3\Big)^\tau(1+x),Z_1^\tau>\frac t3(1+y)\Big\}-\frac{t}{3}
P\Big\{Z_2>\Big(\frac t3\Big)^\tau(1+x),Z_2>\frac t3(1+y)\}\\
&\rightarrow& (1+y)^{-1}-\min\big(((1+x)^+)^{-1/\tau},(1+y)^{-1})\\
& = & \mu_{X,Y>}((-\infty,x] \times (y,\infty])
\end{eqnarray*}
as $t\to\infty$. Here $\mu_{X,Y>}$ denotes the measure that is concentrated on $\{(t^\tau-1,t-1)|t>0\}$ such that $\mu_{X,Y>}\{(t^\tau-1,t-1)|t>r\}=r^{-1}$ for all $r>0$. Hence $(X,Y)\in \CEV^*(\alpha,\beta,c,d,1,\mu_{X,Y>})$. By symmetry of the construction, it follows immediately that \eqref{eq:margconvX} holds and that $(Y,X)$ follows a CEVM as well.

Next, note that for $x,y > 0$
\begin{eqnarray*}
 tP\{X>tx \text{ or } Y>ty\}
&=& tP\{X>tx\}+tP\{Y>ty\} -\frac{t}{3}P\{Z_1^\tau>tx, Z_1>ty\}\\
&& -\frac{t}{3}P\{Z_1>tx, Z_1^\tau>ty\}-\frac{t}{3}P\{Z_2>t\max(x,y)\} \\
&\to &     x^{-1}/3+y^{-1}/3\\
&=:&\mu^*\big([0,\infty]^2\setminus([0,x]\times[0,y])\big),
\end{eqnarray*}
i.e.\ \eqref{Eq:HRV1} holds.  Moreover, according to \eqref{eq:margconvYex}, $\mu^*([0,\infty]\times(y,\infty])=\lim_{t\to\infty} P\{Y>ty\}=y^{-1}/3$, and likewise $\mu^*((x,\infty]\times[0,\infty])=\lim_{t\to\infty} tP\{X>tx\}=x^{-1}/3$. Hence,
$$ \mu^*((x,\infty]^2)= \mu^*((x,\infty]\times[0,\infty])+ \mu^*([0,\infty]\times(x,\infty])-\mu^*\big([0,\infty]^2\setminus[0,x]^2\big)=0
$$
for all $x>0$. Thus $\mu^*((0,\infty]^2)=0$, too, i.e.\ $\mu^*$ is concentrated on the axes with $\mu^*((r,\infty)\times\{0\})=\mu^*(\{0\}\times(r,\infty))= (3r)^{-1}$ for all $r>0$.

However, the vector $(X,Y)$ is not hidden regularly varying. To show this, suppose that there exists a function $\lambda_0$ such that \eqref{Eq:HRV2} holds for a non-degenerate limit measure $\mu_0$. Then there exists $x>0$ such that $\mu_0((x,\infty)^2)>0$ and $(x,\infty)^2$ is a $\mu_0$-continuity set, because $\mu_0$ is homogeneous (see \cite{Res07}, Section 9.4.1). Therefore,
\begin{eqnarray*}
 \lefteqn{\lambda_0(t)P\{X>tx, Y>tx\}}\\
&=& \frac{\lambda_0(t)}{3}\left[P\{Z_1^\tau>tx, Z_1>tx\} + P\{Z_1>tx,Z_1^\tau>tx\}+P\{Z_2>tx\}\right]\\
&=& \frac{\lambda_0(t)}{3}\left[2(tx)^{-1/\tau}+(tx)^{-2}(2+\sin\log(tx))/2\right]\\
& = & \frac{\lambda_0(t)}{3}(tx)^{-2}(2+\sin\log(tx))(1+o(1))/2\\
& \to & \mu_0((x,\infty]^2),
\end{eqnarray*}
which implies that $\lambda_0(t) t^{-2}(2+\sin\log(tx))$ converges to a finite positive constant (depending on $x$). However, convergence \eqref{Eq:HRV2} implies that $\lambda_0$ is a regularly varying function; cf. \cite{Res07}, Section 9.4.1. Because $t^{-2}(2+\sin\log(tx))$ is not a regularly varying function of $t$, this leads to a contradiction. \qed
\end{example}

In this example, the extremal behavior of the vector $(X,Y)$ is dominated by the regularly varying random variable $Z_1$ in the following sense: if at least one component of $(X,Y)$ is large, then this is typically due to the fact that $Z_1$ is large. However, if we are interested in extremal events when both components of $(X,Y)$ exceed the same threshold (or, more general, thresholds of the same order of magnitude), then this happens typically because the  random variable $Z_2$ is large, which is not regularly varying. Therefore, the assumptions of hidden regular variation are not met.

\section{Conclusion}
 \label{sect:discussion}

The discussions of the preceding sections show that the CEVM exhibits specific features which are not shared by other extreme value models. These aspects have not been appropriately addressed in the previous literature. (For instance, \cite{DR11b} state that ``The CEV model primarily differs from the multivariate extreme value model in the domain of attraction condition.'') This may be due to the fact that in the standard case where $(X,Y)$ attains values in $[0,\infty)^2$ and both components are divided by the same factor, CEVM, classical extreme value models and the additional convergence \eqref{Eq:HRV2} considered in models with hidden regular variation can all be considered special cases of a general concept of regular variation on cones; see e.g.\ \cite{DMR13}.

However, as the above examples demonstrate, even for vectors $(X,Y)$ that satisfy both the assumptions of a CEVM and of a model from classical MEVT, in general these models focus on very different information about the behavior of $X$ for large values of $Y$. This is partly due to the fact that, after a linear normalization,  the classical model does not discriminate values of $X$ below $q_{\gamma_X}$, whereas in the limit a CEVM may retain more information about the not necessarily extreme component in that region. A more fundamental difference, though, is that  in classical bivariate extreme value models the normalizing functions for $X$ are completely determined by the marginal tail behavior, whereas in a CEVM they must be adapted to the overall behavior of $X$ for large values of $Y$, which need not correspond to large values of $X$ (or $-X$).
While this allows for a more flexible relationship between $X$ and $Y$, in some situations all normalizing functions that meet the conditions of Definition \ref{def:CEV} (including (ii*)) result in a very coarse description of the behavior of $X$.

This problem is most easily illustrated by the example of a mixture. Suppose two random vectors $(X_1,Y)$ and $(X_2,Y)$ with the same second component both fulfill a CEVM. If we first pick one of the vectors at random and consider the conditional extreme value behavior of the resulting mixture model $P\{(X,Y)\in\cdot\}=(P\{(X_1,Y)\in\cdot\}+P\{(X_2,Y)\in\cdot\})/2$, then in some cases the approximating model specified by the limit measure $\mu_{X,Y>}$ and the normalizing functions  neither retain the information encoded in the CEVM for $(X_1,Y)$ nor the information given by the CEVM for $(X_2,Y)$.
\begin{example} \label{ex:mixt}
  Let  $Y$ be regularly varying with index $-1/\gamma_Y$, i.e.\ \eqref{eq:margconvY} holds with $\gamma_Y>0$, and let $X_i=\omega_i-g_i(Y)$, $i\in\{1,2\}$, for some $\omega_1<\omega_2$ and some invertible functions $g_i>0$ which are regularly varying with negative index $-\tau_i$. Then the inverse $g_i^\leftarrow$ is regularly varying at 0 with index $-1/\tau_i$ and with $U_Y(t):=F_Y^\leftarrow(1-1/t)$ one has $g_i^\leftarrow\big(-g_i(U_Y(t))x\big)=|x|^{-1/\tau_i} U_Y(t)(1+o(1))$ for $x\le 0$ . Thus, for  $y>q_{\gamma_Y}=-1/\gamma_Y$ and $x \leq 0$,
  \begin{eqnarray*}
    \lefteqn{tP\Big\{\frac{X_i-\omega_i}{g_i(U_Y(t))}>x, \frac{Y-U_Y(t)}{\gamma_Y U_Y(t)}>y\Big\}} \\
     & = & tP\big\{g_i(Y)<-g_i(U_Y(t))x, Y>U_Y(t)(1+\gamma_Yy)\big\} \\
     & = & t P\big\{ Y>\max\big(g_i^\leftarrow(-g_i(U_Y(t))x), U_Y(t)(1+\gamma_Yy)\big)\big\}\\
     & = & t \min\big(1-F_Y(|x|^{-1/\tau_i} U_Y(t)(1+o(1))), 1-F_Y(U_Y(t)(1+\gamma_Yy))\big)\\
     & \to & \min\big(|x|^{1/(\tau_i\gamma_Y)}, (1+\gamma_Y y)^{-1/\gamma_Y}\big)\\
     & = & \mu_{X,Y>}((x,\infty]\times(y,\infty]),
  \end{eqnarray*}
as $t \to \infty$. Here $\mu_{X,Y>}$ denotes the measure that is concentrated on $\{(-t^{-\tau_i\gamma_Y},(t^{\gamma_Y}-1)/\gamma_Y)|t>0\}$ such that $\mu_{X,Y>}\{(-t^{-\tau_i\gamma_Y},(t^{\gamma_Y}-1)/\gamma_Y)|t>r\}=r^{-1}$ for all $r>0$. Therefore,
  $(X_i,Y)\in \CEV^*(g_i\circ U_Y,\omega_i,\gamma_YU_Y, U_Y,\gamma_Y,\mu_{X,Y>})$.

  The mixture also follows a CEVM, because
  \begin{eqnarray*}
   tP\Big\{X\le x, \frac{Y-U_Y(t)}{\gamma_Y U_Y(t)}>y\Big\}
   & = & \frac t2 \sum_{i=1}^2 P\big\{g_i(Y)\ge \omega_i-x,Y>U_Y(t)(1+\gamma_Y y)\big\} \\
   & \to & \frac 12 \sum_{i=1}^2 1_{[\omega_i,\infty)}(x) (1+\gamma_Y y)^{-1/\gamma_Y}
  \end{eqnarray*}
  for all $x\in\R$ and $y>q_{\gamma_Y}$. However, only $\omega_1$ and $\omega_2$ can be recovered from this CEVM, whereas the more detailed behavior of $X$ near these points for large values of $Y$, which is specified by the CEVM for $(X_1,Y)$ and $(X_2,Y)$, is lost.

  In contrast, the classical bivariate extreme value model for the mixture $P\{(X,Y)\in\cdot\}$ keeps all the information from the model for the component $P\{(X_2,Y)\in\cdot\}$ with the larger point of accumulation $\omega_2>\omega_1$, while the information about the other component is lost. To see this, note that for all $x<1/(\tau_2\gamma_Y)$, $y>q_{\gamma_Y}$ and sufficiently large $t$
  \begin{eqnarray*}
    \lefteqn{tP\Big\{\frac{X-(\omega_2-g_2(U_Y(t/2)))}{\gamma_Y\tau_2g_2(U_Y(t/2))}>x \text{ or } \frac{Y-U_Y(t)}{\gamma_Y U_Y(t)}>y\Big\}} \\
     & = & \frac t2 \big( P\big\{g_1(Y)< \omega_1-\omega_2+g_2(U_Y(t/2))(1-\gamma_Y\tau_2x) \text{ or } Y>U_Y(t)(1+\gamma_Yy)\big\} \\
     & & \hspace{1cm} + P\big\{g_2(Y)<g_2(U_Y(t/2))(1-\gamma_Y\tau_2x) \text{ or } Y>U_Y(t)(1+\gamma_Yy)\big\}\big)\\
     & = & \frac t2 \big( P\{Y>U_Y(t)(1+\gamma_Yy)\}\\
      & & \hspace{1cm}+ P\big\{Y>\min\big(U_Y(t/2)(1-\gamma_Y\tau_2x)^{-1/\tau_2}(1+o(1)), U_Y(t)(1+\gamma_Yy)\big)\big\}\big) \\
     & \to & \frac 12 (1+\gamma_Yy)^{-1/\gamma_Y} + \max\Big((1-\gamma_Y\tau_2x)^{1/(\gamma_Y\tau_2)}, \frac 12 (1+\gamma_Yy)^{-1/\gamma_Y}\Big)\\
     & = & \mu\big(([-\infty,\infty]\times [q_{\gamma_Y},\infty])\setminus ([-\infty,x]\times [q_{\gamma_Y},y])\big).
  \end{eqnarray*}
  where $\mu$ denotes the measure concentrated on $\{(\gamma_Y^{-1}\tau_2^{-1}(1-t^{-\tau_2}),(t2^{-\gamma_Y}-1)/\gamma_Y)|t>0\}\cup \{(-\infty,(t-1)/\gamma_Y)|t>0\}$ such that $2\mu\big(\{(-\infty,(t-1)/\gamma_Y)|t>r\}\big)=\mu\big(\{(\gamma_Y^{-1}\tau_2^{-1}(1-t^{-\tau_2}),(t2^{-\gamma_Y}-1)/\gamma_Y)|t>r\}\big)=r^{-1/\gamma_Y}, r>0$.\qed
\end{example}
Similar effects occur if, for large values of $Y$, the random variable $X$ may attain values in the neighborhood of different points (possibly including $\infty$ and $-\infty$). The behavior of $X$ in the neighborhood of each finite point can be captured by a CEVM only if the conditions (ii) and (ii*) of Definition \ref{def:CEV}, which rule out mass on $\{\pm\infty\}\times \bar E^{(\gamma_Y)}$, are dropped. Then in general the limit measure is not unique
anymore (up to shift and scaling). In Example \ref{ex:mixt}, one would get three fundamentally distinct limits: the one derived in the example, and two further limit measures which describe the behavior of $X$ near $\omega_1$ and $\omega_2$, respectively, on a more detailed scale.  The two latter ones, which arise if one uses e.g.\ the normalizing functions $\alpha_i(t)=g_i(U_Y(t))$ and $\beta_i(t)=\omega_i$, have mass on $\{\infty\}\times \bar E^{(\gamma_Y)}$ and $\{-\infty\}\times \bar E^{(\gamma_Y)}$, respectively.

In general, of course, there may be arbitrarily many (even an infinite countable number of) accumulation points of $X$ for large values of $Y$, and hence of possible limits. Moreover, even if there is only one point of accumulation, different multiplicative normalizing functions $\alpha$ may lead to different limits if mass on $\{\pm\infty\}\times \bar E^{(\gamma_Y)}$ is allowed{; see e.g.\ Example 5.4 of \cite{DMR13}. So while in some cases omitting the condition (ii) in Definition \ref{def:CEV} allows for a more detailed analysis of $X$ for large values of $Y$, one pays the price of an enormously increased complexity.

An alternative ad-hoc solution in the case when $X$ is concentrated on several accumulation points $\omega_i$ may be to separately consider a CEVM (incl.\ condition (ii*)) for $X$ restricted to a neighborhood $N_i$ of each of these points, that is, to require vague convergence of $tP\big(\{((X-\beta_i(t))/\alpha_i(t),$ \linebreak[4]$(Y-d(t))/c(t))\in \cdot \}\cap\{X\in N_i\}\big)$ for each $i$. One would then have two layers of CEVM. The first is obtained by the present definition. If the marginal measure corresponding to the first coordinate is discrete (or, more generally, has a discrete part), then in addition one may consider `localized' CEVM models on suitable neighborhoods of each point of mass. Although such an approach would yield a much more complete picture, it seems practically feasible only in those cases when the number of accumulation points is small.

\begin{acknowledgement} This project was partly supported by the German Research Foundation DFG, Grant no JA 2160/1. We thank Juta Vollst\"{a}dt for fruitful discussions. The constructive and helpful remarks by a referee and an associate editor led to an improvement of the presentation.
\end{acknowledgement}

\end{document}